\theoremstyle{plain}
\newtheorem{thm}{Theorem}[section]
\newtheorem{lem}[thm]{Lemma}
\def\@rst #1 #2other{#1}
\newcommand\MR[1]{\relax\ifhmode\unskip\spacefactor3000 \space\fi
  \MRhref{\expandafter\@rst #1 other}{#1}}
\newcommand{\MRhref}[2]{\href{http://www.ams.org/mathscinet-getitem?mr=#1}{MR#2}}
\theoremstyle{definition}
\newtheorem{defn}[thm]{Definition}
\numberwithin{equation}{section}
\newcommand{\dsb}{\begin{adjustwidth}{2.5em}{0pt}
\begin{footnotesize}}
\newcommand{\dse}{\end{footnotesize}
\end{adjustwidth}}
\newcommand{\ssb}{\begin{adjustwidth}{2.5em}{0pt}}
\newcommand{\sse}{\end{adjustwidth}}
\newcommand{\aryb}{\begin{eqnarray*}}
\newcommand{\arye}{\end{eqnarray*}}
\def\alb#1\ale{\begin{align*}#1\end{align*}}
\def\allb#1\alle{\begin{align}#1\end{align}}
\newcommand{\eqb}{\begin{equation}}
\newcommand{\eqe}{\end{equation}}
\newcommand{\eqbn}{\begin{equation*}}
\newcommand{\eqen}{\end{equation*}}
\newcommand{\BB}{\mathbbm}
\newcommand{\ol}{\overline}
\newcommand{\ul}{\underline}
\newcommand{\op}{\operatorname}
\newcommand{\frk}{\mathfrak}
\newcommand{\eqD}{\overset{d}{=}}
\newcommand{\ep}{\epsilon}
\newcommand{\rta}{\rightarrow}
\newcommand{\wt}{\widetilde}
\newcommand{\wh}{\widehat} 
\newcommand{\mcl}{\mathcal}
\newcommand{\bdy}{\partial}
\newcommand{\bead}{{\operatorname{b}}}
\newcommand{\wge}{{\infty}}
\newcommand{\sfi}{{\operatorname{a}}} 
\newcommand{\nat}{{}}
\let\originalleft\left
\let\originalright\right
\renewcommand{\left}{\mathopen{}\mathclose\bgroup\originalleft}
\renewcommand{\right}{\aftergroup\egroup\originalright}
\title{Chordal SLE$_6$ explorations of a quantum disk}
\date{  }
\author{
\begin{tabular}{c} Ewain Gwynne\\[-5pt]\small MIT \end{tabular}
\begin{tabular}{c} Jason Miller\\[-5pt]\small Cambridge \end{tabular}
}
\begin{document}

\maketitle

\begin{abstract}
We consider a particular type of $\sqrt{8/3}$-Liouville quantum gravity surface called a doubly marked quantum disk (equivalently, a Brownian disk) decorated by an independent chordal SLE$_6$ curve $\eta$ between its marked boundary points. We obtain descriptions of the law of the quantum surfaces parameterized by the complementary connected components of $\eta([0,t])$ for each time $t \geq 0$ as well as the law of the left/right $\sqrt{8/3}$-quantum boundary length process for $\eta$. 
\end{abstract}

\tableofcontents

\section{Introduction}
\label{sec-intro}

\subsection{Overview} 
\label{sec-overview}

For $\gamma \in (0,2)$, a \emph{$\gamma$-Liouville quantum gravity (LQG) surface} is (formally) the random Riemann surface parameterized by a domain $D\subset \BB C$ whose Riemannian metric tensor is $e^{\gamma h(z)} \,dx\otimes dy$, where $h$ is some variant of the Gaussian free field (GFF) on $D$ and $dx\otimes dy$ is the Euclidean metric tensor. This does not make literal sense since $h$ is a distribution, not a function, so does not take values at points. However, it was shown by Duplantier and Sheffield~\cite{shef-kpz} that one can make sense of the associated volume form $e^{\gamma h(z)} \,dz$ (where $dz$ denotes Lebesgue measure on $D$) associated with a $\gamma$-LQG surface. More precisely, there is a measure~$\mu_h$ on~$D$, called the \emph{$\gamma$-LQG area measure} which is the limit of regularized versions of $e^{\gamma h(z)} \,dz$. A similar construction yields the \emph{$\gamma$-LQG length measure} $\nu_h = ``e^{(\gamma/2) h(z)} \, |dz|"$ which is defined on certain curves in $D$, including $\bdy D$ and SLE$_\kappa$-type curves for $\kappa =\gamma^2$~\cite{shef-zipper}. We remark that there is a more general theory of random measures which have the same law as $\mu_h$, called \emph{Gaussian multiplicative chaos}, which dates back to Kahane~\cite{kahane}; see~\cite{rhodes-vargas-review} for a survey of this theory.

The measures $\mu_h$ and $\nu_h$ are conformally covariant, in the following sense. If $\wt D\subset \BB C$, $f : \wt D \rta D$ is a conformal map, and 
\eqb\label{eqn-lqg-coord}
\wt h := h\circ f + Q\log |f'| \quad \op{for} \quad Q = \frac{2}{\gamma}  +\frac{\gamma}{2} 
\eqe
then $f_* \mu_{\wt h} = \mu_h$ and $f_* \nu_h = \nu_{\wt h}$.  A \emph{$\gamma$-LQG surface} is defined to be an equivalence class of pairs $(D,h)$ where $D\subset \BB C$ and $h$ is a distribution on $D$, with two such pairs declared to be equivalent if they are related by a conformal map as in~\eqref{eqn-lqg-coord} which extends to a homeomorphism $\wt D\cup \bdy \wt D\rta D\cup \bdy D$. Hence a $\gamma$-LQG surface comes equipped with a measure and a conformal structure.

One can also consider $\gamma$-LQG surfaces with $k \in \BB N$ marked points in $D\cup\bdy D$, which are defined in the same manner except that we require the conformal map in~\eqref{eqn-lqg-coord} to map the marked points for one quantum surface to the corresponding marked points for the other.

In the special case when $\gamma =\sqrt{8/3}$, it was shown in~\cite{lqg-tbm1,lqg-tbm2,lqg-tbm3}
%building on \cite{qle,tbm-characterization,sphere-constructions}, 
that a $\sqrt{8/3}$-LQG surface can also be endowed with a metric space structure. In this case, certain special $\sqrt{8/3}$-LQG surfaces introduced in~\cite{wedges} are equivalent (as metric measure spaces) to \emph{Brownian surfaces}, random metric measure spaces which arise as the scaling limits of uniform random planar maps in the Gromov-Hausdorff topology. In particular, the quantum sphere is equivalent to the Brownian map~\cite{legall-uniqueness,miermont-brownian-map}, the $\sqrt{8/3}$-quantum cone is equivalent to the Brownian plane~\cite{curien-legall-plane}, the quantum disk is equivalent to the Brownian disk~\cite{bet-mier-disk}, and the $\sqrt{8/3}$-quantum wedge is equivalent to the Brownian half-plane~\cite{gwynne-miller-uihpq,bmr-uihpq}.

LQG surfaces arise as the scaling limits of random planar maps. The case $\gamma =\sqrt{8/3}$ corresponds to uniform random planar maps of various types (which is consistent with the equivalence of $\sqrt{8/3}$-LQG and Brownian surfaces); and other values of $\gamma$ correspond to random planar maps sampled with probability proportional to the partition function of a statistical mechanics model. For example, the uniform spanning tree corresponds to $\gamma = \sqrt 2$, the Ising model corresponds to $\gamma = \sqrt 3$, and bipolar orientations correspond to $\gamma= \sqrt{4/3}$.

Many random planar map models are naturally decorated by a statistical mechanics model which can be represented by one or more curves. For many such models, the curve-decorated random planar map converges (or is conjectured to converge) in the scaling limit to a $\gamma$-LQG surface decorated by an independent Schramm-Loewner evolution~\cite{schramm0} (SLE) type curve with parameter\footnote{Here and throughout this paper we use the imaginary geometry~\cite{ig1,ig2,ig3,ig4} convention of writing $\kappa = \gamma^2$ for the SLE parameter when $\kappa \in (0,4)$ and $\kappa' = 16/\kappa = 16/\gamma^2$ for the dual parameter.}
\eqb \label{eqn-gamma-kappa}
\kappa = \gamma^2 \quad \op{or} \quad \kappa' = \frac{16}{\gamma^2} .
\eqe  
See, e.g.,~\cite{shef-burger,kmsw-bipolar,lsw-schnyder-wood} 
%\cite{shef-burger,gms-burger-cone,gms-burger-finite,kmsw-bipolar,ghs-bipolar,gwynne-miller-lamination,bhs-site-perc} 
for scaling limit results for curved-decorated random planar maps toward SLE-decorated LQG in the so-called \emph{peanosphere sense}; and~\cite{gwynne-miller-saw,gwynne-miller-perc} for scaling limit results in the \emph{Gromov-Hausdorff-Prokhorov-uniform} topology when $\gamma = \sqrt{8/3}$.

In the continuum, there are a number of results which describe the laws of various objects associated with a $\gamma$-LQG surface decorated by an independent SLE$_\kappa$ or SLE$_{\kappa'}$-type curve. For example, the $\gamma$-LQG length measure is defined on SLE$_\kappa$-type curves~\cite{shef-zipper}, a certain particular $\gamma$-LQG surface called a $\gamma$-quantum cone decorated by an independent space-filling SLE$_{\kappa'}$ curve can be encoded by a correlated two-dimensional Brownian motion via the so-called \emph{peanosphere construction}, and the law of the quantum surfaces parameterized by the complementary connected components of certain SLE$_{\kappa}$ or SLE$_{\kappa'}$-type curves on a $\gamma$-LQG surface can be described explicitly~\cite{shef-zipper,wedges,sphere-constructions,gwynne-miller-char}.  Such results are essential for the proofs of scaling limit results for curve-decorated random planar maps toward SLE-decorated LQG, can be used to prove properties of SLE or LQG, and also play a fundamental role in the definition of quantum Loewner evolution~\cite{qle} and thereby the construction of the $\sqrt{8/3}$-LQG metric~\cite{lqg-tbm1,lqg-tbm2,lqg-tbm3}.

The goal of this paper is to prove several results concerning a particular $\sqrt{8/3}$-LQG surface, namely the doubly marked quantum disk (which we recall is equivalent to the Brownian disk with two points sampled uniformly from its boundary length measure~\cite{lqg-tbm2}), decorated by an independent chordal SLE$_6$ between its two marked points. We will describe the laws of the complementary connected components of the curve stopped at any given time as well as the law of the left/right $\sqrt{8/3}$-LQG boundary length process. 

Analogs of our results for chordal SLE$_6$ on a $\sqrt{8/3}$-quantum wedge, the infinite-volume and infinite-boundary length limit of the quantum disk at a marked boundary point, are proven in~\cite{wedges} (in fact, the results of~\cite{wedges} hold for general $\kappa' \in (4,8)$ and $\gamma=4/\sqrt{\kappa'}$).  We will deduce our results from the infinite-volume and infinite-boundary length case and a conditioning argument. 
%In this sense, the present paper plays a role similar to that of~\cite{sphere-constructions}, which deduces results for SLE on a quantum sphere from the results of~\cite{wedges} for SLE on a quantum cone using conditioning arguments.

The results of the present paper complement the characterization theorems for chordal SLE$_6$ on a quantum disk in~\cite{gwynne-miller-char} since they establish that the hypotheses of the characterization theorem are satisfied for chordal SLE$_6$ on a quantum disk and give us an explicit description of the boundary length process appearing in the theorem statement. 
In~\cite{gwynne-miller-perc}, the results of this paper will be used in conjunction with~\cite[Theorem~7.12]{gwynne-miller-char} to identify the scaling limit of a percolation exploration path on a random quadrangulation with simple boundary as SLE$_6$ on an independent quantum disk. 

Chordal SLE$_6$ on an independent doubly marked quantum disk arises naturally in many applications of $\sqrt{8/3}$-LQG. For example, such curves appear when one skips the bubbles filled in by the space-filling SLE$_6$ curve in the peanosphere construction of~\cite{wedges} after a given time $t$~\cite{gwynne-miller-char}; and in the construction of CLE$_6$~\cite{shef-cle} on an independent $\sqrt{8/3}$-LQG surface via branching SLE. 
However, this particular curve-decorated quantum surface is not studied specifically in~\cite{wedges}. 
Hence we expect that the results of the present paper are likely to also have other applications in the study of SLE and $\sqrt{8/3}$-LQG.

\bigskip

\noindent{\bf Acknowledgements}  J.M.\ thanks Institut Henri Poincar\'e for support as a holder of the Poincar\'e chair, during which part of this work was completed. We also thank two anonymous referees for a helpful set of comments which led to many improvements in the exposition of this article.

\subsection{Main results} 
\label{sec-results}

Let $\gamma=\sqrt{8/3}$ and $\kappa'=6$.  Also fix $\frk l^L , \frk l^R > 0$.  Let $\mcl B = (\BB H , h^\nat , 0 , \infty)$ be a doubly marked quantum disk with left/right boundary lengths $\frk l^L$ and $\frk l^R$ (and random area).  That is, $\nu_{h^\nat}((-\infty,0]) = \frk l^L$ and $\nu_{h^\nat}([0,\infty)) = \frk l^R$. See Section~\ref{sec-wedge} for more on the quantum disk.  Let $\eta^\nat$ be a chordal SLE$_6$ from $0$ to $\infty$ in $\BB H$, independent from $h^\nat$, with some choice of parameterization. 

By SLE duality~\cite{zhan-duality1,zhan-duality2,dubedat-duality,ig1,ig4}, for each $u\geq 0$ the boundary of each connected component of $\BB H\setminus \eta^\nat([0,u])$ is the union of a (possibly empty) segment of $\BB R$ and finitely many SLE$_{8/3}$-type curves. Therefore, we can define the $\sqrt{8/3}$-LQG length measure $\nu_{h^\nat}$ on the boundary of each such component using~\cite[Theorem 1.3]{shef-zipper} and local absolute continuity (this measure can also be defined directly as a Gaussian multiplicative chaos measure with respect to the Minkowski content of the SLE$_{8/3}$-type curve using the results of~\cite{benoist-lqg-chaos}).

For $u  \geq 0$, let $L_u^\nat$ (resp.\ $R_u^\nat$) be equal to $-\frk l^L$ (resp.\ $-\frk l^R$) plus the $\nu_{h^\nat}$-length of the boundary arc of the unbounded connected component of $\BB H\setminus \eta^\nat([0,u])$ which lies to the left (resp.\ right) of $\eta^\nat(u)$ and define the \emph{left/right boundary length process} $Z_u^\nat := (L_u^\nat, R_u^\nat)$.  The process $Z^\nat$ is a continuum analog of the left/right boundary length process for the percolation peeling process on a random planar map with boundary (also called the \emph{horodistance}), as studied, e.g., in~\cite{curien-glimpse,gwynne-miller-perc}. Note that the definition of the left/right boundary length process differs from that in~\cite[Section~6]{gwynne-miller-char} by a translation by $(\frk l^L , \frk l^R)$.  The reason for this translation is so that the process in the present paper starts at $(0,0)$. 

The process $L_u^\nat$ (resp.\ $R_u^\nat$) is a right continuous process with no upward jumps, but with a downward jump whenever $\eta^\nat$ disconnects a bubble from $\infty$ on its left (resp.\ right) side. The magnitude of the downward jump corresponds to the boundary length of the bubble.  
 
The main choice of parameterization we will consider for the curve $\eta^\nat$ in this paper is the \emph{quantum natural time} parameterization with respect to $h^\nat$.  Roughly speaking, parameterizing $\eta^\nat$ by quantum natural time with respect to $h^\nat$ is equivalent to parameterizing by the ``quantum local time" at the set of times when $\eta^\nat$ disconnects a bubble from $\infty$. Formally, quantum natural time is defined for a chordal SLE$_{\kappa'}$, $\kappa' \in (4,8)$, and an independent free-boundary GFF on $\BB H$ with a $\gamma/2$-log singularity at the origin in~\cite[Definition~6.23]{wedges}; and is defined for the pair $(  h^\nat , \eta^\nat )$ via local absolute continuity.

In the remainder of this subsection we assume that $\eta^\nat$ is parameterized by quantum natural time with respect to $h^\nat$.  Let $S^\nat$ be the total quantum natural time length of $ \eta^\nat$, so that $ \eta^\nat$ is defined on the random interval $[0,S^\nat]$. We extend $ \eta^\nat$ to all of $[0,\infty)$ by setting $ \eta^\nat(u) = \infty$ for $u > S^\nat$. Note that this implies that $Z^\nat_u = (-\frk l^L,-\frk l^R)$ for $u > S^\nat$.

\begin{thm} \label{thm-sle-bead-nat}
Suppose we are in the setting described just above (so in particular $\gamma=\sqrt{8/3}$, $\kappa'=6$, and $\frk l^L , \frk l^R > 0$). For $u \geq 0$, let $\mcl W^\nat_u$ be the doubly marked quantum surface obtained by restricting $h^\nat$ to the unbounded connected component of $\BB H\setminus \eta^\nat([0,u])$, with marked points $\eta^\nat(u)$ and $\infty$. 
If we condition on $Z^\nat|_{[0,u]}$, then the conditional law of $\mcl W^\nat_u$ is that of a doubly marked quantum disk with left/right boundary lengths $L_u^\nat + \frk l^L$ and $R_u^\nat + \frk l^R$ (here a quantum disk with zero boundary length is a single point). The conditional law of the collection of singly marked quantum surfaces obtained by restricting $h^\nat$ to the bubbles disconnected from $\infty$ by $\eta^\nat$ before time $u$, each marked by the point where $\eta^\nat$ finishes tracing its boundary, is that of a collection of independent singly marked quantum disks parameterized by the downward jumps of the coordinates of $Z^\nat|_{[0,u]}$, with boundary lengths given by the magnitudes of these downward jumps, and these singly marked quantum surfaces are conditionally independent from $\mcl W^\nat_u$.
\end{thm}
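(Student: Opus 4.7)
The strategy is to reduce Theorem~\ref{thm-sle-bead-nat} to its analog for chordal SLE$_6$ on a $\sqrt{8/3}$-quantum wedge proved in \cite{wedges}, via a disintegration that extracts $\mcl B$ as a bead of a thin wedge with prescribed left/right boundary lengths.

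First, I would recall the wedge input. For a $\sqrt{8/3}$-quantum wedge $\wt{\mcl W} = (\BB H, \wt h, 0, \infty)$ of appropriate weight decorated by an independent chordal SLE$_6$ $\wt\eta$ from $0$ to $\infty$ parameterized by quantum natural time, \cite{wedges} gives an explicit L\'evy-process description of the left/right boundary length process $\wt Z$ (with independent totally asymmetric $3/2$-stable components) together with the following conditional statement: for each $u \geq 0$, given $\wt Z|_{[0,u]}$, the unbounded component of $\BB H \setminus \wt\eta([0,u])$ is another quantum wedge of the same type with boundary lengths shifted by $\wt Z_u$; the bubbles cut off before time $u$ are conditionally independent singly marked quantum disks with boundary lengths equal to the downward jumps of $\wt Z|_{[0,u]}$; and the wedge and the bubbles are jointly conditionally independent.

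Next I would perform the disintegration that passes to the disk. An appropriate thin $\sqrt{8/3}$-quantum wedge decomposes along its boundary as a countable chain of doubly marked quantum disks (beads) with an explicit joint law on left/right boundary lengths, and by the definition of $\mcl B$ the regular conditional law of a single bead given that its boundary lengths equal $(\frk l^L, \frk l^R)$ is precisely the law of $\mcl B$. Under this conditioning, the restriction of $\wt\eta$ to the bead coincides with $\eta^\nat$, with quantum natural time parameterizations matching via their local-absolute-continuity definition, and the restriction of $\wt Z$ to the quantum natural time interval occupied by the bead coincides with $Z^\nat$ up to the affine translation sending its initial value to $(0,0)$. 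Applying the wedge conditional statement at a time $u$ inside the bead and then intersecting with the conditioning on the bead lengths yields the theorem: the ``unbounded component of the wedge at time $u$'', restricted to the bead, becomes the bounded doubly marked quantum disk $\mcl W^\nat_u$ with the claimed boundary lengths $(L_u^\nat + \frk l^L, R_u^\nat + \frk l^R)$; the bubbles cut off inside the bead are unaffected by the restriction and are the asserted independent singly marked disks.

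The principal technical obstacle will be making the disintegration over the bead lengths rigorous, since one is conditioning on an event of probability zero. I would handle this using regular conditional distributions adapted to the explicit Poisson point process description of the thin wedge from \cite{wedges}, taking care to verify that the quantum natural time parameterization transfers correctly across the restriction and that the conditional independence of the unbounded component and the bubbles asserted by the wedge input is preserved under the further conditioning on the bead's boundary lengths.
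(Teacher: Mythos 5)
Your proposal identifies the correct wedge input from \cite{wedges} but runs into a genuine conceptual gap in the ``disintegration'' step, which is where the actual substance of the theorem lies.

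The wedge for which \cite[Theorem 1.18, Corollary 1.19]{wedges} give the stable-process boundary-length description and the explored/unexplored independence is the \emph{thick} $\sqrt{8/3}$-quantum wedge; this surface does not decompose into beads and carries a single chordal SLE$_6$ from $0$ to $\infty$. The wedge whose beads are doubly marked quantum disks is the \emph{thin} $\frac{3\gamma}{2}=\sqrt 6$-quantum wedge, and (via the peanosphere construction recalled in Section~\ref{sec-peanosphere}) it carries a \emph{concatenation} of independent chordal SLE$_6$'s, one per bead, not a restriction of a single wedge-SLE$_6$. These are different objects, so ``the restriction of $\wt\eta$ to the bead'' does not make sense: $\wt\eta$ lives on the thick wedge, which has no beads, and the per-bead SLE$_6$ on the thin wedge is not obtained by restricting $\wt\eta$. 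The two quantum surfaces are only \emph{locally} absolutely continuous near the first marked point, which is not enough to transfer a global conditional-law statement. You therefore cannot directly disintegrate the thick-wedge conditional statement along the bead decomposition.

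Even granting a coupling that produces a bead decorated by a chordal SLE$_6$, the step ``intersect with the conditioning on the bead lengths'' is precisely the nontrivial content, and appealing to regular conditional distributions for a Poisson point process does not by itself produce it. Conditioning on the bead having prescribed lengths is a measure-zero conditioning that simultaneously changes the law of $\mcl W^\nat_u$ (from a wedge to a disk) and the law of $Z^\nat$ (from a stable process to something with a nontrivial Radon--Nikodym derivative); one must actually compute that derivative. The paper handles this by a concrete positive-probability approximation that your outline does not contain: on the thick wedge one runs an auxiliary SLE$_6$ $\eta^\sfi$ from $\xi(-\frk l^L)$ and conditions on the event $F_\ep(\frk l^L+\frk l^R)$ that $\eta^\sfi$ disconnects a bubble whose trace on $\BB R$ is approximately $\xi([-\frk l^L,\frk l^R])$ and whose off-$\BB R$ boundary has small quantum length. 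By \cite[Theorem 1.18]{wedges} this bubble is a quantum disk, by SLE$_6$ locality the main curve inside it is a chordal SLE$_6$, and the stable-process structure of $Z^\sfi$ gives exact control of $\BB P[F_\ep]$ and of the conditional probability of $F_\ep$ given $(\mcl K_u^\wge,\eta^\wge_{\mcl K_u^\wge})$ (Lemmas~\ref{lem-cont-peel-prob} and~\ref{lem-cont-peel-rn}). Bayes' rule and the convergence in Lemma~\ref{lem-cont-peel-conv} then yield the explicit Radon--Nikodym derivative of Theorem~\ref{thm-wge-bead-rn}, from which Theorem~\ref{thm-sle-bead-nat} follows. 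Without some analog of this approximation-and-Bayes argument, the disintegration you propose is effectively circular, since it presupposes the conditional law of the explored/unexplored pieces given the bead lengths, which is exactly what the theorem asserts.
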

 
We also obtain a description of the law of the process $Z^\nat$ in terms of the Radon-Nikodym derivative of the law of $Z^\nat|_{[0,u]}$ with respect to the law of a pair of independent $3/2$ stable processes started from $(0,0)$ (which is the left/right boundary length process for chordal SLE$_6$ on a $\sqrt{8/3}$-quantum wedge~\cite[Corollary~1.19]{wedges}), run up to time $u$. 
Recall that $S^\nat =\inf\{u \geq 0 : \eta^\nat(u) = \infty\} $ is the total quantum natural time of $\eta^\nat$, as defined above the statement of Theorem~\ref{thm-sle-bead-nat}.

\begin{thm} \label{thm-bead-process-law}
Suppose we are in the setting described just above (so in particular $\gamma=\sqrt{8/3}$, $\kappa'=6$, and $\frk l^L , \frk l^R > 0$).
\begin{enumerate}
\item (Endpoint continuity) Almost surely, the time $S^\nat$ is finite. Furthermore, a.s.\ 
\eqb \label{eqn-S^bead-formula}
S^\nat = \inf\left\{ u \geq 0 :  L_u^\nat = -\frk l^L \right\}  = \inf\left\{ u \geq 0 :  R_u^\nat = -\frk l^R \right\}
\eqe 
and a.s.\ $\lim_{u \rta S^\nat}  \eta^\nat(u) = \infty$ and $\lim_{u \rta S^\nat} Z_u^\nat = (-\frk l^L , -\frk l^R)$. \label{item-bead-process-time}
\item (Radon-Nikodym derivative) Let $Z^\wge = ( L^\wge ,   R^\wge)$ be a pair of independent totally asymmetric $3/2$-stable processes with no upward jumps\footnote{The L\'evy measure of each of $L^\wge$ and $R^\wge$ is given by $c |t|^{-5/2}  \BB 1_{(t < 0)} \, dt$ for some constant $c>0$ which we do not compute explicitly. However, it will be clear from the proof that this constant is the same as the (non-explicit) scaling constant for the L\'evy measure of the two coordinates of the stable process as in~\cite[Corollary~1.19]{wedges}.} 
and define
\eqb \label{eqn-S^wge-def}
S^\wge := \inf\left\{ u\geq 0:  L_u^\wge \leq -\frk l^L \: \op{or} \:  R_u^\wge \leq -\frk l^R  \right\}   .
\eqe 
For $u\geq 0$, the sub-probability measure obtained by restricting the law of $Z^\nat|_{[0,u]}$ to the event $\{u < S^\nat\}$ is absolutely continuous with respect to the law of $Z^\wge|_{[0,u]}$, with Radon-Nikodym derivative given by
\eqbn
 \left( \frac{ L_u^\wge +  R_u^\wge}{\frk l^L + \frk l^R} + 1 \right)^{-5/2}\BB 1_{(u < S^\wge) } .
\eqen
\label{item-bead-process-nat} 
\end{enumerate} 
\end{thm}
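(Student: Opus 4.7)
My plan is to derive Theorem~\ref{thm-bead-process-law} from the analogous description of chordal SLE$_6$ on a $\sqrt{8/3}$-quantum wedge proven in~\cite{wedges} via a Doob-type conditioning argument, following the strategy indicated in the introduction. The key input is \cite[Corollary~1.19]{wedges}: if one replaces $(h^\nat, \eta^\nat)$ by a $\sqrt{8/3}$-quantum wedge decorated by an independent chordal SLE$_6$ parameterized by quantum natural time, then the associated left/right boundary length process is exactly $Z^\wge$ (a pair of independent totally asymmetric $3/2$-stable processes with no upward jumps), and the quantum surfaces cut off by the curve are conditionally independent quantum disks (given $Z^\wge$) with boundary lengths given by the downward jumps of $Z^\wge$. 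The guiding heuristic is that a doubly marked quantum disk with boundary lengths $(\frk l^L, \frk l^R)$ is a single ``bead'' of such a $\sqrt{8/3}$-quantum wedge, so its associated boundary length process and complementary quantum surfaces can be read off from the wedge process via an appropriate conditioning.

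The central step is to compute the Radon-Nikodym derivative appearing in Part~\ref{item-bead-process-nat}. Morally, $Z^\nat$ is $Z^\wge$ conditioned to exit the quadrant $\{L > -\frk l^L, R > -\frk l^R\}$ exactly at the corner $(-\frk l^L, -\frk l^R)$; since this is a null event, one must realize the conditioning as a Doob $h$-transform. I expect the relevant $h$-function to be identified from the sigma-finite bead disintegration of the wedge (developed in~\cite{wedges,sphere-constructions}), and to be determined by the scaling of the $3/2$-stable transition density $p_t(x) = t^{-2/3} p_1(xt^{-2/3})$ together with the scaling of the quantum disk boundary length distribution. These scalings force $h$ to depend only on the ``remaining total boundary length'' $(\frk l^L + \frk l^R) + L_u^\wge + R_u^\wge$ divided by the initial total $\frk l^L + \frk l^R$; matching exponents then produces the factor $\bigl((L_u^\wge + R_u^\wge)/(\frk l^L + \frk l^R) + 1\bigr)^{-5/2}$ and the indicator $\BB 1_{(u < S^\wge)}$.

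Part~\ref{item-bead-process-time} should then follow by combining the Radon-Nikodym description from Part~\ref{item-bead-process-nat} with Theorem~\ref{thm-sle-bead-nat}. The $(\cdot)^{-5/2}$ weight blows up as the wedge trajectory approaches the boundary of the quadrant, which forces the biased process to exit in finite time exactly at $(-\frk l^L, -\frk l^R)$; this gives both $S^\nat < \infty$ a.s.\ and the identification of $S^\nat$ with the first hitting time of $-\frk l^L$ by $L^\nat$ and of $-\frk l^R$ by $R^\nat$. The continuity statements $Z^\nat_u \to (-\frk l^L, -\frk l^R)$ and $\eta^\nat(u) \to \infty$ as $u \uparrow S^\nat$ then follow because Theorem~\ref{thm-sle-bead-nat} identifies the unbounded complementary component $\mcl W^\nat_u$ as a doubly marked quantum disk with left/right boundary lengths $(L_u^\nat + \frk l^L, R_u^\nat + \frk l^R)$, and both of these must vanish as the remaining surface is fully consumed.

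I expect the main technical obstacle to be making the ``disk as conditioned bead'' identification rigorous, so that the null-event conditioning heuristic becomes an honest Doob/Bayes computation. This can be done either via a limiting procedure (conditioning on small neighborhoods of $(\frk l^L, \frk l^R)$ in boundary-length space and passing to the limit) or by directly exploiting the sigma-finite bead disintegration of the quantum wedge from~\cite{wedges,sphere-constructions}. Once that identification is in hand, pinning down the exponent $-5/2$ reduces to a scaling computation; but confirming that this exponent (rather than, say, $-3/2$ or $-2$) is the one that simultaneously matches the $3/2$-stable transition-density scaling and the quantum disk boundary length tail is the most delicate point of the argument.
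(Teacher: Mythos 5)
Your overall strategy---relating the disk to the wedge via a singular conditioning---is the right one, but there are two concrete problems with the plan as written.

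First, the logical order is reversed, and this creates a genuine circularity. You propose to establish Part~\ref{item-bead-process-nat} first and then deduce Part~\ref{item-bead-process-time} from it. But to even take the $\delta\to 0$ limit that passes from a truncated Radon-Nikodym derivative to the stated one, you need to know that
$\bigcup_{\delta>0}\{\inf_{v\le u} L^\nat_v > -\frk l^L,\ \inf_{v\le u} R^\nat_v>-\frk l^R\}=\{u<S^\nat\}$,
and that identity is precisely the formula~\eqref{eqn-S^bead-formula} you are trying to deduce afterward. The paper therefore proves endpoint continuity (Lemma~\ref{lem-terminal-time}) \emph{first}, by an independent argument---local absolute continuity of the field near $\infty$, reversibility of SLE$_{\kappa'}$ and of Bessel excursions to show $S^\nat<\infty$, and continuity of the peanosphere Brownian motion to get $Z^\nat_u\to(-\frk l^L,-\frk l^R)$. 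Your proposed derivation of Part~\ref{item-bead-process-time} from Part~\ref{item-bead-process-nat} also has an independent flaw: the weight $\bigl((L^\wge_u+R^\wge_u)/(\frk l^L+\frk l^R)+1\bigr)^{-5/2}$ depends only on the sum $L^\wge_u+R^\wge_u$ and blows up along the entire anti-diagonal of the quadrant, not just at the corner; it does not by itself force the biased process to exit at $(-\frk l^L,-\frk l^R)$ rather than at some other boundary point.

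Second, on Part~\ref{item-bead-process-nat}, you correctly identify that the conditioning must be realized as a limit (or a Doob-type transform), but you do not identify the mechanism that makes the computation tractable. The paper introduces a second, auxiliary chordal SLE$_6$ curve $\eta^\sfi$ started at the point $\frk l^L$ units of boundary length to the left of the origin, and conditions on the event $F_\ep(\frk l^L+\frk l^R)$ that $\eta^\sfi$ cuts off a bubble containing $0$ with boundary length in $[\frk l^L+\frk l^R, \frk l^L+\frk l^R+\ep]$ and small left/right outer-boundary length. The essential ingredients are then: (i) the fact that the jumps of the auxiliary $3/2$-stable boundary length process form a Poisson point process with intensity $c\,y^{-5/2}\,dy$, which yields the sharp asymptotic $\BB P[F_\ep(r)]\sim \tfrac32\,\ep^{5/2} r^{-5/2}$ (Lemma~\ref{lem-cont-peel-prob}); (ii) the \emph{locality} of SLE$_6$, which allows the same computation to be repeated in the unexplored region $\mcl W^\wge_u$, giving $\BB P[F_\ep\mid (\mcl K_u^\wge,\eta^\wge_{\mcl K_u^\wge})]\sim\tfrac32\,\ep^{5/2}(\frk l^L+\frk l^R+L^\wge_u+R^\wge_u)^{-5/2}$; and (iii) Bayes' rule to take the ratio. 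Your heuristic of matching the $3/2$-stable transition-density scaling with the quantum disk boundary-length scaling points in the right direction, but it neither pins down the exponent $-5/2$ (which falls out directly from the tail of the L\'evy measure) nor supplies the localization argument needed to condition consistently on the $\sigma$-algebra generated by an initial segment of $\eta^\wge$. Without locality, it is unclear how your bead disintegration interacts with the filtration $\sigma(\mcl K^\wge_u,\eta^\wge_{\mcl K^\wge_u})$, and this is in fact the reason the result is specific to $\kappa'=6$.
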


Assertion~\ref{item-bead-process-time} of Theorem~\ref{thm-bead-process-law} will be proven in Section~\ref{sec-bead-process-time} via a local absolute continuity argument.  In Section~\ref{sec-bead-process-nat}, we will prove Theorem~\ref{thm-sle-bead-nat} and assertion~\ref{item-bead-process-nat} of Theorem~\ref{thm-bead-process-law} simultaneously using the analogous statements for chordal SLE$_6$ on a $\sqrt{8/3}$-quantum wedge (the infinite-volume and infinite-boundary length limit of the quantum disk when we zoom in at a fixed boundary point), which are proven in~\cite{wedges}, and a conditioning argument. 

These statements are proven via a continuum analog of the proof of the Radon-Nikodym derivative estimate~\cite[Lemma~3.6]{gwynne-miller-simple-quad} for peeling processes on finite and infinite quadrangulations with simple boundary, where one peels a single quadrilateral in the uniform infinite half-plane quadrangulation with simple boundary and conditions on the event that the bubble it disconnects from $\infty$ has given boundary length in order to produce a free Boltzmann quadrangulation with simple boundary. The proof is illustrated in Figure~\ref{fig-bead-process-law}.

The idea of the proof of Theorems~\ref{thm-sle-bead-nat} and~\ref{thm-bead-process-law} is as follows.  We start with a $\sqrt{8/3}$-quantum wedge $(\BB H , h^\wge , 0,\infty)$ and an independent chordal SLE$_6$ $\eta^\wge$ from $0$ to $\infty$, parameterized by quantum natural time with respect to $h^\wge$. By~\cite[Corollary~1.19]{wedges} we can take the process $Z^\wge$ appearing in Theorem~\ref{thm-bead-process-law} to be the left/right boundary length process for the pair $(h^\wge , \eta^\wge)$. We then consider a second independent chordal SLE$_6$ curve $\eta^\sfi$ at a point lying at $-\frk l^L$ units of $\nu_{h^\wge}$-length to the left of the origin. We condition on the event $F_\ep(\frk l^L+\frk l^R)$ that the connected component of $\BB H\setminus \eta^\sfi$ containing the origin on its boundary, marked by the point $0$ and the point where $\eta^\sfi$ finishes tracing its boundary, has left/right quantum boundary lengths approximately $\frk l^L$ and $\frk l^R$. The results of~\cite{wedges} imply that on $F_\ep(\frk l^L+\frk l^R)$, the quantum surface parameterized by this connected component is approximately a doubly marked quantum disk with left/right quantum boundary lengths $\frk l^L$ and $\frk l^R$. We deduce our desired results by comparing the (unconditional) probability of $F_\ep(\frk l^L+\frk l^R)$ and the conditional probability of $F_\ep(\frk l^L+\frk l^R)$ given $Z^\wge|_{[0,u]}$ and the quantum surfaces parameterized by the bubbles disconnected from $\infty$ by $\eta^\wge|_{[0,u]}$.

Our proof only works in the special case when $\kappa'=6$ ($\gamma=\sqrt{8/3}$), for the following two reasons.
\begin{enumerate}
\item We make heavy use of the locality property of SLE$_6$. 
\item For $\gamma \in (0,2)$, it is natural to draw a chordal SLE$_{\kappa'}$ on an independent $\frac{4}{\gamma}- \frac{\gamma}{2}$- (weight-$\frac{3\gamma^2}{2}-2$) quantum wedge, which locally looks like a single bead of a $\frac{3\gamma}{2}$-quantum wedge near its marked point~\cite[Theorem~1.18]{wedges}. On the other hand, the $\gamma$-quantum wedge is the only one whose law is invariant under the operation of translating by a given amount of quantum length along the boundary~\cite[Proposition~1.7]{shef-zipper} (this statement is needed since $\eta^\sfi$ is not started at the origin). We have $\frac{4}{\gamma}- \frac{\gamma}{2} = \gamma$ only for $\gamma =\sqrt{8/3}$. 
\end{enumerate}

We now comment briefly on the extent to which we expect our results to extend to other values of $\kappa' \in (4,8)$ and corresponding LQG parameter $\gamma =4/\sqrt{\kappa'} \in (\sqrt 2 , 2)$. For such values of $\kappa'$ we do not expect to get nice descriptions of the law of objects associated with a chordal SLE$_{\kappa'}$ on an independent doubly marked quantum disk. It is instead more natural to consider chordal SLE$_{\kappa'}$ on an independent quantum surface whose law is that of a single bead of a $\frac{3\gamma}{2}$-quantum wedge (see Section~\ref{sec-wedge}). Intuitively, the reason for this is that a quantum disk naturally corresponds to a statistical physics model with either all black or all white boundary conditions.  When $\kappa'=6$, the statistical physics model is percolation so the boundary conditions do not matter.  A bead of a $\frac{3\gamma}{2}$-quantum wedge naturally corresponds to a statistical physics model with white (resp.\ black) boundary conditions on its left (resp.\ right) side.  Mathematically, this manifests itself in that~\cite[Theorem~1.18]{wedges} concerns chordal SLE$_{\kappa'}$ on a $\left( \frac{4}{\gamma} - \frac{\gamma}{2} \right)$-quantum wedge, which is an infinite-volume quantum surface which has the same type of log-singularity at its first marked point as does a single bead of a $\frac{3\gamma}{2}$-quantum wedge. 

We expect that Theorem~\ref{thm-sle-bead-nat} is true for general values of $\kappa'\in (4,8)$ but with a single bead of a $\frac{3\gamma}{2}$-quantum wedge in place of a doubly marked quantum disk, but our proof does not extend to this case. 
Assertion~\ref{item-bead-process-time} of Theorem~\ref{thm-bead-process-law} is true for general values of $\kappa'\in (4,8)$, with the same proof, and remains true if we fix the area of the quantum disk; see Lemma~\ref{lem-terminal-time} below.  
We do not have any reason to believe that a Radon-Nikodym derivative formula as simple as the one in assertion~\ref{item-bead-process-nat} Theorem~\ref{thm-bead-process-law} continues to hold for $\kappa'\not=6$.

\section{Preliminaries}
\label{sec-prelim}

In this section we will introduce some notation and review some facts about Liouville quantum gravity and SLE, mostly coming from~\cite{wedges}, which are needed for the proofs of our results.  The reader who is already familiar with these objects can safely skip this section.

\subsection{Basic notation}
\label{sec-basic}

Here we record some basic notation which we will use throughout this paper. 

\noindent
We write $\BB N$ for the set of positive integers and $\BB N_0 = \BB N\cup \{0\}$. 
\vspace{6pt}

\noindent
For $a < b \in \BB R$, we define the discrete intervals $[a,b]_{\BB Z} := [a, b]\cap \BB Z$ and $(a,b)_{\BB Z} := (a,b)\cap \BB Z$.
\vspace{6pt}

\noindent
If $a$ and $b$ are two quantities, we write $a\preceq b$ (resp.\ $a \succeq b$) if there is a constant $C>0$ (independent of the parameters of interest) such that $a \leq C b$ (resp.\ $a \geq C b$). We write $a \asymp b$ if $a\preceq b$ and $a \succeq b$.
\vspace{6pt}

\noindent
If $a$ and $b$ are two quantities which depend on a parameter $x$, we write $a = o_x(b)$ (resp.\ $a = O_x(b)$) if $a/b \rta 0$ (resp.\ $a/b$ remains bounded) as $x \rta 0$, or as $x\rta\infty$, depending on context. 
\vspace{6pt}

\noindent
Unless otherwise stated, all implicit constants in $\asymp, \preceq$, and $\succeq$ and $O_x(\cdot)$ and $o_x(\cdot)$ errors involved in the proof of a result are required to depend only on the auxiliary parameters that the implicit constants in the statement of the result are allowed to depend on.

\subsection{LQG surfaces} 
\label{sec-wedge}

Recall from Section~\ref{sec-overview} that a $\gamma$-LQG surface for $\gamma \in (0,2)$ with $k\in\BB N$ marked points is an equivalence class $\mcl S$ of $(k+2)$-tuples $(D,h,x_1,\dots ,x_k)$ where $D\subset \BB C$; $h$ is a distribution on $D$, typically some variant of the Gaussian free field (GFF)~\cite{shef-gff,ss-contour,shef-zipper,ig1,ig4}; and $x_1,\dots , x_k \in D\cup\bdy D$ are marked points. Two such $(k+2)$-tuples are equivalent if there is a conformal map $f : \wt D \rta D$ which satisfies~\eqref{eqn-lqg-coord} and also $f(\wt x_j ) = x_j$ for $j\in [1,k]_{\BB Z}$.  We will often slightly abuse notation by writing $\mcl S = (D,h,x_1,\dots ,x_k)$ for a quantum surface (rather than just a single equivalence class representative). 

The main type of LQG surface we will be interested in this paper is the \emph{quantum disk} which is a finite-volume quantum surface (i.e., the total mass of the $\gamma$-LQG area measure $\mu_h$ is finite) which can be represented as $(\BB D , h)$ and is defined formally in~\cite[Section~4.5]{wedges}. One can consider quantum disks with fixed boundary length or with fixed area and boundary length. A \emph{singly (resp.\ doubly) marked quantum disk} is a quantum disk together with one (resp.\ two) marked points sampled uniformly from the $\gamma$-quantum boundary length measure $\nu_h$.  For a doubly marked quantum disk, one can also condition on the left and right quantum boundary lengths, which are the lengths of the left and right boundary arcs between the marked points, respectively, if we stand at the first marked point and look into the interior of the disk. See Appendix~\ref{sec-disk} for a precise definition of the quantum disk.

For our proofs, we will also need to consider several other types of quantum surfaces.  For $\alpha \leq Q$ (with $Q$ as in~\eqref{eqn-lqg-coord}) an \emph{$\alpha$-quantum wedge} is an infinite-volume quantum surface $(\BB H , h^\infty , 0, \infty)$ parameterized by the upper half-plane defined formally in~\cite[Section~1.6]{shef-zipper} or~\cite[Section~4.2]{wedges}. This quantum surface is obtained by starting with a $\wt h^\infty - \alpha\log |\cdot|$, where $\wt h^\infty$ is a free-boundary GFF on $\BB H$; then zooming in near the origin and re-scaling to fix the additive constant in a canonical way~\cite[Proposition~4.7(ii)]{wedges}. 

The case $\alpha=\gamma$ is special since a quantum surface has a $- \gamma$-log singularity at a typical point for the $\gamma$-LQG boundary length measure, so a $\gamma$-quantum wedge describes the local behavior of a $\gamma$-LQG surface at a quantum typical boundary point. The law of $\gamma$-quantum wedge is invariant under translating by a given amount of $\gamma$-quantum length along the boundary. In other words, if we fix $t \geq 0$ and let $x_t$ be chosen so that $\nu_h([0,x_t]) = t$, then $(\BB H , h^\infty , x_t , \infty) \eqD (\BB H , h , 0,\infty)$ as quantum surfaces~\cite[Proposition~1.7]{shef-zipper}.  

For $\alpha \in (Q,Q+\gamma/2)$, an \emph{$\alpha$-quantum wedge} is a Poissonian string of \emph{beads} glued together end-to-end, each of which is itself a finite-volume doubly-marked quantum surface with the topology of the disk; see~\cite[Definition~4.15]{wedges}.  One can also consider a single bead of an $\alpha$-quantum wedge by itself, conditioned on its left and right quantum boundary lengths or on these boundary lengths and its area. The law of such a bead is invariant under the operation of swapping the marked points, and has a $(2Q-\alpha)$-log singularity at each marked point. Quantum wedges for $\alpha \leq Q$ are called \emph{thick} and those for $\alpha \in (Q,Q+\gamma/2)$ are called \emph{thin}. 

The main type of thin quantum wedge in which we will be interested in this paper is the $\frac{3\gamma}{2}$-quantum wedge. The reason for this is that in the special case when $\gamma =\sqrt6$, a single bead of a $\frac{3\gamma}{2} =\sqrt6$-quantum wedge is the same as a doubly marked quantum disk (this follows from the definitions in~\cite[Sections 4.4 and 4.5]{wedges}).  For general $\gamma \in (0,2)$, the laws of a single bead of a $\frac{3\gamma}{2}$-quantum wedge and a $\left( \frac{4}{\gamma} - \frac{\gamma}{2} \right)$-quantum wedge are locally absolutely continuous with respect to each other in a neighborhood of their first marked points.  We note that $\frac{4}{\gamma} - \frac{\gamma}{2} = \gamma$ if and only if $\gamma = \sqrt{8/3}$. 

For $\alpha < Q $, an \emph{$\alpha$-quantum cone} is an infinite volume quantum surface $(\BB C , h' , 0, \infty)$ obtained by starting with a whole-plane GFF plus a $-\alpha$-log singularity at the origin and zooming in near the origin while fixing the additive constant in a consistent way. The $\alpha$-quantum cone is defined in~\cite[Section~4.3]{wedges}. As in the case of quantum wedges, the case $\alpha=\gamma$ is special since a GFF-type distribution has a $-\gamma$-log singularity at a typical point for its $\gamma$-LQG area measure~\cite[Section~3.3]{shef-kpz}.

\subsection{Curve-decorated quantum surfaces} 
\label{sec-surface-curve}

Our main interest in this paper is in quantum surfaces decorated by various types of curves, typically SLE$_{\kappa'}$-type curves for $\kappa' = 16/\gamma^2$ (as in~\eqref{eqn-gamma-kappa}). It will be important for us to distinguish between curves in subsets of $\BB C$ and \emph{curve-decorated quantum surfaces}.  By the latter, we mean an equivalence class of $(k+3)$-tuples $(D ,h , x_1,\dots,x_k , \eta)$ for some $k\in\BB N$, where $(D,h,x_1,\dots,x_k)$ is an equivalence class representative for a quantum surface with $k$ marked points and $\eta$ is a parameterized curve in $D$, with two such $(k+3)$-tuples $(D,h,x_1,\dots,x_k , \eta)$ and $(\wt D, \wt h, \wt x_1,\dots, \wt x_k , \wt\eta)$ defined two be equivalent if there is a conformal map $f : \wt D\rta  D$ which satisfies~\eqref{eqn-lqg-coord} and also $f(\wt x_j) = x_j$ for $j\in [1,k]_{\BB Z}$ and $f\circ\wt\eta = \eta$. 

We allow ``curves" $\eta$ which are defined on a general closed subset of $\BB R$, rather than just a single interval, which arise naturally if we want to restrict a curve to the pre-image of some set whose pre-image is not connected.

Since most of our curves will be originally defined on subsets of $\BB C$, we introduce the following notation.

\begin{defn} \label{def-surface-curve}
Let $ (D,h,x_1,\dots,x_k)$ be an embedding of a quantum surface $\mcl S$ and let $\eta$ be a curve in $\BB C$. Let $I$ be the closure of the interior of $ \eta^{-1}(\ol D) $ and let $\eta_{\mcl S}$ be the curve $\eta|_I$, viewed as a curve on $\mcl S$, so that $(\mcl S , \eta_{\mcl S})$ is a curve-decorated quantum surface represented by the equivalence class of $ (D,h,x_1,\dots,x_k , \eta)$ modulo conformal maps.
\end{defn}

In the remainder of this subsection, we describe how to define a topology on certain families of curve-decorated quantum surfaces using the perspective that a quantum surface is the same as an equivalence class of measure spaces modulo conformal maps.
 
For $k\in\BB N $, let $\BB M_k^{\op{CPU}}$ be the set of all equivalence classes $\mcl K $ of $(k+3)$-tuples $(D , \mu , \eta , x_1,\dots , x_k)$ with $D\subset \BB C$ a simply connected domain (with $D \neq \BB C$), $\mu$ a Borel measure on $D$ which is finite on compact subsets of $ D$, $\eta : \BB R\rta \ol D$ a curve which extends continuously to the extended real line $\BB R\cup \{-\infty ,\infty\}$, $x_1\in D$, and $x_2 ,  \dots , x_k \in D\cup \bdy D$ (with $\bdy D$ viewed as a collection of prime ends).  Two such $(k+3)$-tuples $(D,\mu, \eta , x_1,\dots , x_k)$ and $(\wt D,\wt \mu , \wt\eta, \wt x_1 , \dots , \wt x_k)$ are declared to be equivalent if there exists a conformal map $f : \wt D\rta D$ such that 
\eqb \label{eqn-cp-equiv}
f_* \wt \mu =  \mu ,\quad f \circ \wt \eta = \eta ,  \quad \op{and} \quad f(\wt x_j) =  x_j,\: \forall j\in [1,k]_{\BB Z} .
\eqe  
The reason why $x_1$ is required to be in $ D$ (not in $\bdy D$) is as follows. Just below, we will define a metric on $\BB M_k^{\op{CPU}}$ by ``embedding" two elements of $\BB M_k^{\op{CPU}}$ into the unit disk in such a way that their first marked points are sent to $0$. If we allowed arbitrary embeddings, our metric would not be positive definite due to conformal maps which carry all of the mass to the boundary.  

If $(D , \mu , x_1,\dots ,x_k)$ is a finite measure space (without a curve) with $k$ marked points, the first of which is in the interior, viewed modulo conformal maps, then $(D,\mu,x_1,\dots ,x_k)$ can be viewed as an element of $\BB M_k^{\op{CPU}}$ whose corresponding curve is constant at $x_1$. 
 
The discussion in Section~\ref{sec-wedge} implies that a finite-area curve-decorated $\gamma$-quantum surface $(D,h, \eta , x_1,\dots,x_k)$ with $k$ marked points such that $x_1\in D$ can be viewed as an element of $\BB M_k^{\op{CPU}}$, with $\mu = \mu_h$ the $\gamma$-quantum area measure.   Here we note that the $\gamma$-LQG measure a.s.\ determines the field~\cite{bss-lqg-gff}.  A quantum surface without an interior marked point can be made into a quantum surface with an interior marked point by, e.g., sampling a point uniformly from $\mu_h|_U$ (normalized to be a probability measure) for some open set $U \subset D$ for a fixed choice of equivalence class representative.  We can thereby view such a curve-decorated quantum surface as an element  of $\BB M_{k+1}^{\op{CPU}}$.  Curve-decorated quantum surfaces will be our main examples of elements of $\BB M_k^{\op{CPU}}$. 

To define a metric on $\BB M_k^{\op{CPU}}$, we note that the Riemann mapping theorem implies that each $\mcl K\in \BB M_k^{\op{CPU}}$ admits an equivalence class representative of the form $(\BB D , \mu , \eta ,  0, x_1,\dots,x_k)$ (i.e., the domain is $\BB D$ and the first marked point is~$0$).  For a domain $D\subset \BB C$, let~$\BB d_D^{\op{P}}$ be the Prokhorov metric on finite Borel measures on~$D$ and let $\BB d_D^{\op{U}}$ be the uniform metric on curves in~$D$.  We define the \emph{conformal Prokhorov-uniform distance} between elements $\mcl K  ,\wt{\mcl K} \in \BB M_k^{\op{CPU}}$ by the formula 
\eqb \label{eqn-cp-dist}
\BB d_k^{\op{CPU}}\left(\mcl K , \wt{\mcl K} \right) =  \inf_{\substack{ (\BB D, \mu , \eta  , 0 ,\dots , x_k) \in \mcl K ,\\ (\BB D , \wt\mu  , \wt\eta,  0 , \dots , \wt x_k) \in \wt{\mcl K} } } 
\left\{     \BB d_{\BB D}^{\op{P}}(\mu ,\wt \mu) + \BB d_{\BB D}^{\op{U}}(\eta,\wt\eta) + \sum_{j=2}^k | x_j - \wt x_j|         \right\} .
\eqe  
It is easily verified that $\BB d_k^{\op{CPU}}$ is a metric on $\BB M_k^{\op{CPU}}$, whereby $\mcl K$ and $\wt{\mcl K}$ are $\BB d_{k}^{\op{CPU}}$-close if they can be embedded into $\BB D$ in such a way that their first marked points are sent to $0$, their measures are close in the Prokhorov distance, their curves are close in the uniform distance, and their corresponding marked points are close in the Euclidean distance.  

The above construction gives us a topology on simply connected curve-decorated quantum surfaces. We will also have occasion to consider convergence of beaded quantum surfaces, i.e.\ those which can be represented as a countable ordered collection of finite quantum surfaces, with each such surface attached to its neighbors at a pair of points, with the property that the total quantum area of the beads between any two given beads is finite. One can extend the above metric to curve-decorated beaded quantum surfaces where each bead has $k$ marked points, the first of which is in the interior of the bead, by viewing such a surface as a function from $[0,\infty)$ to $\BB M_k^{\op{CPU}}$ and considering a weighted $L^1$ norm on such functions; see~\cite[Section~2.2.6]{gwynne-miller-char} for more details.

\subsection{Peanosphere construction}  
\label{sec-peanosphere}

In this subsection we review the \emph{peanosphere construction} of~\cite{wedges}, which will occasionally be useful in the proofs of our main theorems (primarily for local absolute continuity purposes). 

\emph{Whole-plane space-filling SLE$_{\kappa'}$ from $\infty$ to $\infty$} is a two-sided variant of SLE$_{\kappa'}$ in $\BB C$ defined in~\cite[Section~1.4.1]{wedges}, building on~\cite[Sections~1.2.3 and~4.3]{ig4}. In the case when $\kappa'  \geq 8$, ordinary SLE$_{\kappa'}$ is already space-filling and whole-plane space-filling SLE$_{\kappa'}$ is just two-sided chordal SLE$_{\kappa'}$. In the case when $\kappa ' \in (4,8)$ (which is the only case we will consider in this paper), whole-plane space-filling SLE$_{\kappa'}$ is obtained from a two-sided variant of chordal SLE$_{\kappa'}$ by iteratively filling in the bubbles surrounded by the curve by space-filling loops. Note that whole-plane space-filling SLE$_{\kappa'}$ in this regime cannot be described via the Loewner evolution. 

Let $\mcl C = (\BB C , h' , 0, \infty)$ be a $\gamma$-quantum cone (recall Section~\ref{sec-wedge}) and let $\eta'$ be a whole-plane space-filling SLE$_{\kappa'}$ with $\kappa' = 16/\gamma^2$ from $\infty$ to $\infty$ independent from $h$ and parameterized in such a way that $\eta'(0) =0$ and the $\gamma$-quantum area measure satisfies $\mu_h(\eta'([s,t])) = t-s$ whenever $s,t\in\BB R$ with $s<t$. 
For $t\geq 0$, let $L_t'$ be equal to the $\gamma$-quantum length of the segment of the left boundary of $\eta'([t,\infty))$ which is shared with $\eta'([0,t])$ minus the $\gamma$-quantum length of the segment of the left boundary of $\eta'([0,t])$ which is not shared with $\eta'([t,\infty))$; and for $t < 0$, let $L_t'$ be the $\gamma$-quantum length of the segment of the left boundary of $\eta'([t,0])$ which is not shared with $\eta'((-\infty,t])$ minus the $\gamma$-quantum length of the segment of the left boundary of $\eta'([t,0])$ which is shared with $\eta'((-\infty,t])$. 
Define $R_t'$ similarly with ``right" in place of ``left". 
Also let $Z' := (L',R') : \BB R\rta \BB R^2$. 

It is shown in~\cite[Theorem~1.9]{wedges} (see also~\cite{kappa8-cov} for the case $\kappa' \geq 8$) that there is a deterministic constant $\alpha = \alpha(\gamma) > 0$ such that $Z'$ evolves as a pair of correlated two-dimensional Brownian motions with variances and covariances given by
\eqbn
\op{Var} L_t' = \op{Var} R_t' = \alpha|t| \quad\op{and}\quad \op{Cov}(L_t',R_t') = -\alpha \cos\left( \frac{\pi \gamma^2}{4} \right) ,\quad \forall t\in\BB R .
\eqen

The Brownian motion $Z$ is referred to as the \emph{peanosphere Brownian motion}. The reason for the name is that $Z$ can be used to construct a random curve-decorated topological space called an \emph{infinite-volume peanosphere}, which a.s.\ differs from $(\BB C ,\eta')$ by a curve-preserving homeomorphism. We will not need this object here so we do not review its definition. 

Henceforth we restrict attention to the case when $\kappa' \in (4,8)$. We will describe how to couple a single bead of a $\frac{3\gamma}{2}$-quantum wedge (with random area and left/right boundary lengths) decorated by a chordal SLE$_{\kappa'}$ with the pair $(h',\eta')$. This coupling is our main use for the peanosphere construction in this paper and is also used in~\cite[Section~6]{gwynne-miller-char}.   See Figure~\ref{fig-surface-def} for an illustration.

\begin{figure}[ht!]
\begin{center}
\includegraphics[scale=.8]{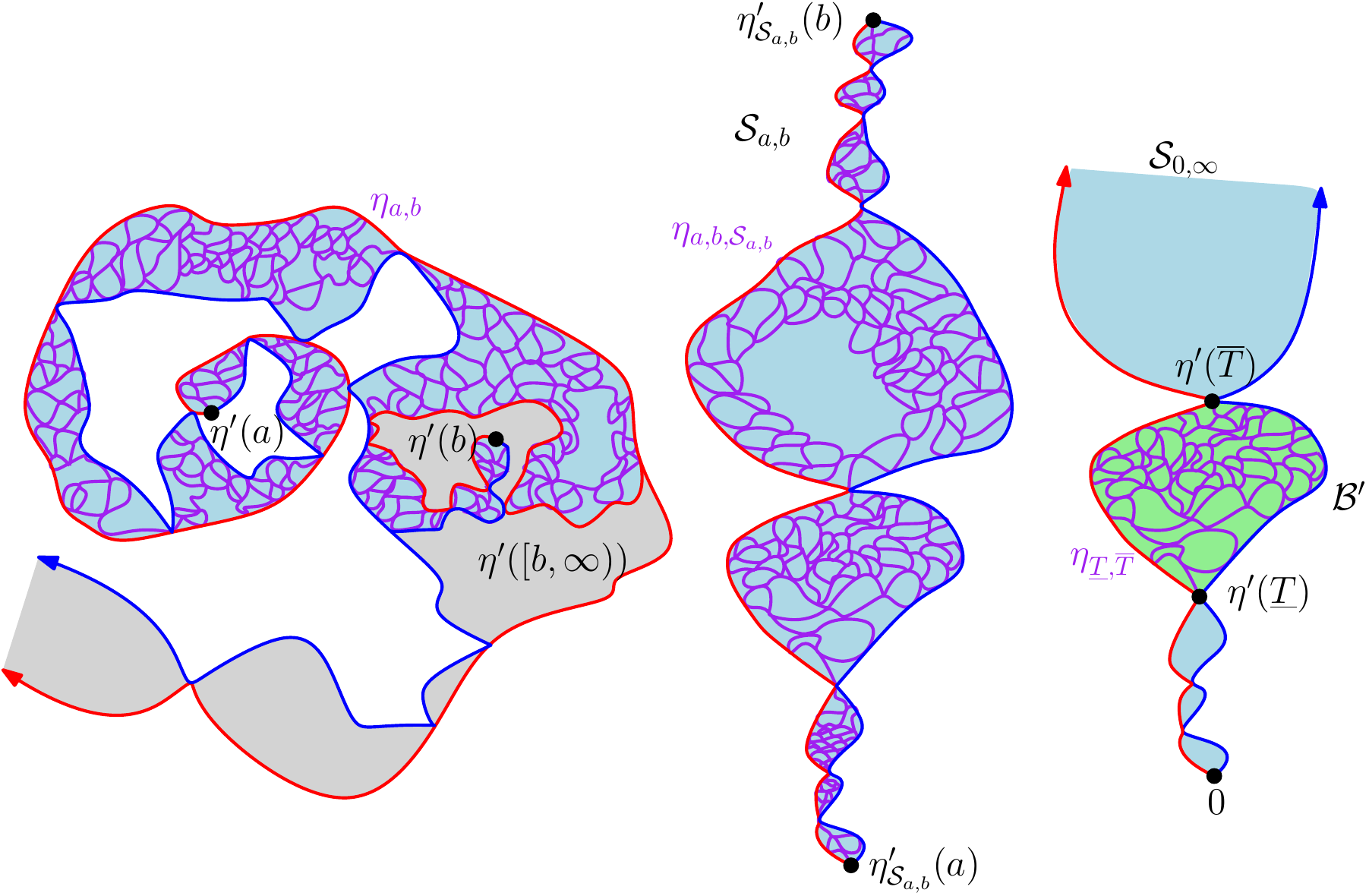}
\end{center}
\caption{
\textbf{Left:} A typical space-filling SLE$_{\kappa'}$ segment $\eta'([a,b])$ for $a < b$ (light blue) and the associated curve $\eta_{a,b}$ (purple).
\textbf{Middle:} The surface $\mcl S_{a,b}$ parameterized by $\eta'([a,b])$ and the curve $\eta_{a,b,\mcl S_{a,b}}$ (which is $\eta_{a,b}$, viewed as a curve on $\mcl S_{a,b}$ instead of in $\BB C$; Definition~\ref{def-surface-curve}). Note that $\mcl S_{a,b}$ does not include information about the exterior pinch points of $\eta'([a,b])$, and its beads correspond to the connected components of the interior of $\eta'([a,b])$.
\textbf{Right:} The $\frac{3\gamma}{2}$-quantum wedge $\mcl S_{0,\infty}$ (light blue) and the single bead $\mcl B' = \mcl S_{\ul T , \ol T}$ (light green) with its associated chordal SLE$_{\kappa'}$ curve $\eta_{\ul T , \ol T  }$ (purple). 
 }\label{fig-surface-def} 
\end{figure}

For $a,b\in \BB R \cup \{-\infty, \infty\}$, let $\mcl S_{a,b}$ be the (possibly beaded) quantum surface parameterized by $\eta'([a,b] )$. Also let $\eta_{a,b} : [a,b] \rta \eta'([a,b])$ be the curve obtained from $\eta'|_{[a,b]}$ by skipping the intervals of time during which it is filling in a bubble (as in~\cite[Section~3.1]{gwynne-miller-char}). Then $\eta_{a,b}$ is a concatenation of SLE$_{\kappa'}$-type curves in the connected components of the interior of $\eta'([a,b])$ (more precisely, $\eta_{a,b}$ in each component is either an SLE$_{\kappa'}$ or an SLE$_{\kappa'}(\kappa'/2-4;\kappa'/2-4)$; see~\cite[Lemma 3.6]{gwynne-miller-char}). Whenever one of these curves disconnects a bubble from its target point, it is constant on an interval of time equal to the $\mu_{h'}$-mass of the bubble. 
 
By the last statement of~\cite[Theorem 1.9]{wedges}, for $t\in\BB R$, the quantum surfaces $\mcl S_{-\infty,t}$ and $\mcl S_{t,\infty}$ are  independent $\frac{3\gamma }{2}$-quantum wedges. Furthermore,~\cite[Footnote 4]{wedges} implies that the future curve $\eta_{t,\infty , \mcl S_{t,\infty} }$ (here we use Definition~\ref{def-surface-curve}) is a concatenation of independent chordal SLE$_{\kappa'}$ curves, one in each of the beads of $\mcl S_{t,\infty}$. 

Let $\BB M$ be the infinite measure on beads of a $\frac{3\gamma}{2}$-quantum wedge (recall~\cite[Definition~4.15]{wedges}) and let $\BB m $ be the infinite measure on $(0,\infty)^3$ which is the pushforward of $\BB M$ under the function which assigns to each bead its vector of area and left/right quantum boundary lengths. 
Let $\frk A \subset (0,\infty)^3$ be a Borel set such that $\BB m(\frk A)$ is finite and positive. 

Let $(\frk a , \frk l^L , \frk l^R)$ be sampled from the probability measure $\BB m(\frk A)^{-1} \BB m|_{\frk A}$, and let $\mcl B'$ be the first bead of the $\frac{3\gamma}{2}$-quantum wedge $\mcl S_{0,\infty}$ whose vector of area and left/right quantum boundary lengths belongs to $\frk A$. 
By our choice of $\frk A$ and since the beads of $\mcl S_{0,\infty}$ are a Poisson point process sampled from $\BB M$, we find that $\mcl B'$ is well-defined a.s.\ and that the law of $\mcl B'$ is that of a single bead of a $\frac{3\gamma}{2}$-quantum wedge conditioned to have area and left/right quantum boundary lengths in $\frk A$. 
Furthermore, if we let $\ul T $ and $\ol T$, respectively, be the times at which $\eta'$ starts and finishes filling in the bead $\mcl B'$, then the curve $\eta_{\ul T , \ol T , \mcl B'}$ (defined as above with $a = \ul T$ and $b=\ol T$ and viewed as a curve on the surface $\mcl B'$ as in Definition~\ref{def-surface-curve}) is a chordal SLE$_{\kappa'}$ in $\mcl B'$ between its two marked points. 

The parameterization of $\eta_{\ul T , \ol T }$ is the parameterization it inherits from $\eta'|_{[\ul T , \ol T]}$. Since $\eta'$ is parameterized by quantum mass, it follows that $\eta_{\ul T , \ol T }$ is parameterized by the $\mu_{h'}$-mass of the region it disconnects from its target point $\eta'(\ol T)$, which is defined as follows (as in~\cite[Definition~6.1]{gwynne-miller-char}).

\begin{defn} \label{def-chordal-parameterization}
Suppose $X$ is a topological space equipped with a measure $\mu$, $x\in X$, and $\eta : [0,T] \rta X$ is a curve with $\eta(T) = x$. We say that $\eta$ is \emph{parameterized by the $\mu$-mass it disconnects from $x$} if the following is true. For $t \in [0,T]$, let $U_t$ be the connected component of $X\setminus \eta([0,t])$ containing $x$ and let $K_t = X\setminus U_t$ be the hull generated by $\eta([0,t])$. Then for each $t\in [0,T]$, 
\eqb  \label{eqn-chordal-parameterization}
\eta(t) = \eta\left( \inf\left\{ s\in [0,T] : \mu(K_s) \geq t\right\} \right) .
\eqe
\end{defn} 

The left/right boundary length process of $\eta_{\ul T , \ol T}$ (as defined in Section~\ref{sec-results}) is obtained from $(Z-Z_{\ul T})|_{[\ul T , \ol T]} $ by skipping the intervals of time during which $\eta_{\ul T , \ol T}$ is filling in one of the bubbles which it disconnects from $\eta'(\ol T)$ (equivalently the maximal $\pi/2$-cone times for $Z$ in the time interval $[\ul T , \ol T]$, defined as in~\cite[Section~2.3.4]{gwynne-miller-char}).

\section{Proofs of main results}
\label{sec-bead-process-law}

\subsection{Proof of assertion~\ref{item-bead-process-time} of Theorem~\ref{thm-bead-process-law}}
\label{sec-bead-process-time}

In this subsection we will prove assertion~\ref{item-bead-process-time} of Theorem~\ref{thm-bead-process-law}. As promised after the statement of Theorem~\ref{thm-bead-process-law}, we do this for general values of $\gamma \in (\sqrt 2 , 2)$ and also allow for fixed area. 

\begin{lem} \label{lem-terminal-time}
Let $\gamma \in (\sqrt 2 , 2)$, let $\kappa' = 16/\gamma^2$, and fix $\frk a, \frk l^L , \frk l^R >0$. Let $\mcl B = (\BB H ,h^\nat  , 0,\infty)$ be a single bead of a $\frac{3\gamma}{2}$-quantum wedge with area $\frk a$ and left/right boundary lengths $\frk l^L$ and $\frk l^R$ and let $ \eta^\nat$ be a chordal SLE$_{\kappa'}$ from 0 to $\infty$ in $\BB H$, sampled independently from $h^\nat$ and then parameterized by quantum natural time with respect to $h^\nat$.  Let $Z^\nat $ be the left/right boundary length process for $\eta^\nat$ started from $(0,0)$ and let $S^\nat$ be the total quantum natural time length of $ \eta^\nat$, as in Section~\ref{sec-results}.

Almost surely, the time $S^\nat$ is finite. Furthermore, a.s.\ 
\eqb \label{eqn-S^bead-formula'}
S^\nat = \inf\left\{ u \geq 0 : L_u^\nat = -\frk l^L \right\}  = \inf\left\{ u \geq 0 : L_u^\nat = -\frk l^R \right\}
\eqe 
and a.s.\ $\lim_{u \rta S^\nat}  \eta^\nat(u) = \infty$ and $\lim_{ u \rta S^\nat}  Z_u^\nat = (-\frk l^L , -\frk l^R)$.  
\end{lem}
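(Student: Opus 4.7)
The plan is to deduce the result from the corresponding statement for chordal SLE$_{\kappa'}$ on a $\left(\frac{4}{\gamma}-\frac{\gamma}{2}\right)$-quantum wedge via local absolute continuity, together with a coupling to the peanosphere for the finiteness and endpoint statements. Let $(\BB H, h^\wge, 0, \infty)$ be a $\left(\frac{4}{\gamma}-\frac{\gamma}{2}\right)$-quantum wedge decorated by an independent chordal SLE$_{\kappa'}$ curve $\eta^\wge$ parameterized by quantum natural time, and let $Z^\wge = (L^\wge, R^\wge)$ be its boundary length process. By the results of~\cite{wedges}, the two coordinates of $Z^\wge$ are independent totally asymmetric stable processes with only downward jumps, so each coordinate a.s.\ hits any fixed negative level in finite time.

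As discussed in Section~\ref{sec-wedge}, the law of a single bead of a $\frac{3\gamma}{2}$-quantum wedge (at fixed area and boundary lengths) is locally absolutely continuous, near its first marked point, with respect to the law of a $\left(\frac{4}{\gamma}-\frac{\gamma}{2}\right)$-quantum wedge. Since $\eta^\nat$ and $\eta^\wge$ are independent chordal SLE$_{\kappa'}$ curves, this extends to local absolute continuity of the curve-decorated surfaces stopped at any time which is a.s.\ bounded away from the exit of the absolutely continuous region. For $\ep > 0$, let $\sigma_\ep^\nat$ be the first time $u$ with $L_u^\nat \leq -\frk l^L + \ep$ or $R_u^\nat \leq -\frk l^R + \ep$, and define $\sigma_\ep^\wge$ analogously. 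Since $\sigma_\ep^\wge < \infty$ a.s., the absolute continuity gives $\sigma_\ep^\nat < \infty$ a.s.\ as well.

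For the finiteness of $S^\nat$ and the identification of the endpoint values, I use the peanosphere coupling from Section~\ref{sec-peanosphere} to realize $\mcl B$ as a bead of the $\frac{3\gamma}{2}$-quantum wedge $\mcl S_{0,\infty}$ (conditioned on its area and boundary-length triple), with $\eta^\nat$ corresponding (up to reparameterization) to the restriction of the space-filling SLE$_{\kappa'}$ to this bead. Since the bead has finite quantum area, the space-filling curve fills it in finite (quantum area) time, which translates into finite total quantum natural time $S^\nat$ for the chordal curve. At the terminal time $S^\nat$ the remaining component has collapsed to the point $\infty$, so $L_{S^\nat}^\nat = -\frk l^L$, $R_{S^\nat}^\nat = -\frk l^R$, and $\eta^\nat(u) \to \infty$ as $u \to S^\nat$.

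The main obstacle is to show that $S^\nat$ equals the first time either coordinate of $Z^\nat$ hits its threshold. This reduces to showing that for every $u < S^\nat$ the remaining component of $\BB H \setminus \eta^\nat([0,u])$ has strictly positive left and right quantum boundary lengths, so that neither coordinate of $Z^\nat$ reaches its threshold strictly before $S^\nat$. I would establish this by transferring the analogous strict-positivity statement for $Z^\wge$ on the wedge (which is immediate since $L^\wge$ and $R^\wge$ are independent stable processes that a.s.\ do not jump to the same level simultaneously and never stick at their running infima) to the bead via the local absolute continuity above, applied at the stopping times $\sigma_\ep^\nat$. Sending $\ep \to 0$ in the inequality $\sigma_\ep^\nat \leq S^\nat$ then yields that neither $L^\nat$ nor $R^\nat$ hits its threshold strictly before $S^\nat$, completing the proof.
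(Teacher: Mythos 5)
Your proposal follows the paper's overall strategy in two of the three parts (local absolute continuity with a thick quantum wedge for the initial segment, and the peanosphere coupling for the endpoint limit $\lim_{u\to S^\nat} Z_u^\nat = (-\frk l^L , -\frk l^R)$), but there is a genuine gap in the finiteness argument for $S^\nat$.

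You write that since the bead has finite quantum area, the space-filling curve fills it in finite quantum-area time, ``which translates into finite total quantum natural time $S^\nat$.'' This implication is not automatic and is precisely the nontrivial content. Quantum natural time is a different parameterization from quantum area: the chordal curve parameterized by the $\mu_{h^\nat}$-mass it disconnects is indeed defined on a bounded interval $[\ul T,\ol T]$ of length $\frk a$, but the quantum natural time is a local time on the (Lebesgue-null) set of non-fill-in times, and a priori a local time on a null set supported in a bounded interval can be infinite. Local absolute continuity near the marked point $0$ only shows that the quantum natural time of any \emph{proper} initial segment (before the curve reaches $\infty$) is finite; it says nothing about the total length, since the field near $\infty$ is not locally absolutely continuous with respect to a wedge at $0$. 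The paper resolves this by a symmetry argument: by reversibility of SLE$_{\kappa'}$~\cite{ig3} together with the invariance $(\BB H , h^\nat , 0,\infty)\eqD(\BB H , h^\nat ,\infty,0)$ of a bead under swapping its marked points (which follows from reversibility of Bessel excursions), the final segment of $\eta^\nat$ has the same quantum natural time length as an initial segment of the reversed picture, so the local absolute continuity argument applies at the other end as well. Your proof needs this (or some substitute) to close the gap.

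Two smaller remarks. First, your peanosphere argument proving $\lim_{u\to S^\nat} Z_u^\nat=(-\frk l^L,-\frk l^R)$ is essentially the paper's; note however that it requires the transfer from the randomized $(\frk a,\frk l^L,\frk l^R)$ used in the peanosphere coupling to the fixed values in the lemma statement, which the paper handles by mutual absolute continuity of the restricted fields $h^\nat|_{\BB H\setminus\BB D}$ across parameter choices. Second, for the identification $S^\nat=\inf\{u: L_u^\nat=-\frk l^L\}$ your route through stopping times $\sigma_\ep^\nat$ and absolute continuity is more roundabout than needed: the paper simply observes that before $\eta^\nat$ reaches $\infty$ the left and right boundary arcs of the unbounded component each contain a nonempty interval of $\BB R$ (since for each $r>0$ the curve a.s.\ hits $(-\infty,-r]$ and $[r,\infty)$ before $\infty$), and $\nu_{h^\nat}$ is atomless and positive on nonempty intervals, so neither coordinate of $Z^\nat$ reaches its threshold before time $S^\nat$.
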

\begin{proof} 
By local absolute continuity of the law of $h^\nat$ with respect to the law of an embedding into $(\BB H , 0, \infty)$ of a $\left(\frac{4}{\gamma} - \frac{\gamma}{2} \right)$-quantum wedge (Section~\ref{sec-wedge}), we infer that the quantum natural time length of any segment of the curve $ \eta^\nat$ before the first time it reaches $\infty$ is finite. Equivalently, $\lim_{u \rta S^\nat}  \eta^\nat(u) = \infty$. For each $r > 0$, a.s.\ $ \eta^\nat$ hits each of $(-\infty,-r]$ and $[r,\infty)$ before hitting $\infty$. Since the $\nu_{h^\nat}$-length of any non-empty interval in $\BB R$ is positive and $\nu_{h^\nat}$ does not have any atoms, we obtain the formula~\eqref{eqn-S^bead-formula'}.

To show that $S^\nat$ is a.s.\ finite, we need to show that the quantum natural time length of the final segment of $ \eta^\nat$ is finite. 
Let $\eta_\infty^\nat$ be the time reversal of $ \eta^\nat$, parameterized by the $\mu_{  h^\nat}$-mass of the region it disconnects from $0$. By reversibility of SLE$_{\kappa'}$~\cite{ig3}, $ \eta_\infty^\nat$ is a chordal SLE$_{\kappa'}$ from $\infty$ to 0. By~\cite[Definition~4.15]{wedges} and reversibility of Bessel excursions, we also have $(\BB H ,h^\nat , 0,\infty) \eqD (\BB H , h^\nat , \infty ,0)$.
For $u > 0$, the total quantum natural time length of $ \eta^\nat|_{[u,\infty)} $ is the same as the total quantum natural time length of an appropriate segment of $ \eta_\infty^\nat$ before it hits $0$, so is a.s.\ finite by the first paragraph. Hence a.s.\ $S^\nat <\infty$.  

It remains to check that $\lim_{u \rta S^\nat}   Z_u^\nat = (-\frk l^L , -\frk l^R)$. For this purpose, let $\eta^\bead$ be given by $\eta^\nat$, parameterized by the $\mu_{h^\nat}$-mass it disconnects from $\infty$. 
If we sample $(\frk a , \frk l^L , \frk l^R)$ randomly from a set $\frk A\subset (0,\infty)^3$ as in Section~\ref{sec-peanosphere}, then we can couple $(\mcl B , \eta^\bead_{\mcl B})$ with a $\gamma$-quantum cone and an independent whole-plane space-filling SLE$_{\kappa'}$ as in that subsection in such a way that $(\mcl B , \eta^\bead_{\mcl B}) = (\mcl B' , \eta_{\ul T ,\ol T ,\mcl B'})$ a.s., where $\eta_{\ul T ,\ol T,\mcl B'} $ is the ordinary SLE$_{\kappa'}$-type curve in the bead $\mcl B'$, as defined at the end of Section~\ref{sec-peanosphere}. Continuity of the peanosphere Brownian motion $Z'$ implies that the boundary length process $Z^\bead$ for $\eta^\bead$ a.s.\ satisfies $\lim_{t\rta \frk a} Z_t^\bead = (0,0)$, so since $ Z^\nat$ is a re-parameterization of $Z^\bead $, a.s.\ $\lim_{u \rta S^\nat }  Z_u^\nat = (-\frk l^L , -\frk l^R)$. 
The statement for fixed $(\frk a ,\frk l^L , \frk l^R)$ follows since for different choices of $(\frk a ,\frk l^L , \frk l^R)$, the laws of the fields $h^\nat|_{\BB H \setminus \BB D}$ are mutually absolutely continuous provided we choose our embedding in such a way that the quantum areas of these restricted fields agree (this can be seen from the definition of a beaded quantum wedge in~\cite[Definition 4.15]{wedges} and standard local absolute continuity lemmas for the GFF, e.g.,~\cite[Proposition 3.4]{ig1}).    
\end{proof}

\subsection{Proof of Theorem~\ref{thm-sle-bead-nat} and assertion~\ref{item-bead-process-nat} of Theorem~\ref{thm-bead-process-law}}
\label{sec-bead-process-nat}
 
Throughout this subsection, we assume we are in the setting of Section~\ref{sec-results}, so that $\gamma =\sqrt{8/3}$, $\kappa'=6$, $\frk l^L , \frk l^R > 0$, $\mcl B = (\BB H , h^\nat , 0, \infty)$ is a doubly marked quantum disk with left/right quantum boundary lengths $\frk l^L$ and $\frk l^R$, $\eta^\nat$ is chordal SLE$_6$ from 0 to $\infty$ in $\BB H$, sampled independently from $h^\nat$ and parameterized by quantum natural time with respect to $h^\nat$, $Z^\nat = ( L^\nat , R^\nat)$ is the corresponding left/right boundary length process, and $S^\nat$ is the total quantum natural time length of $\eta^\nat$. 
In this subsection we will simultaneously prove Theorem~\ref{thm-sle-bead-nat} and assertion~\ref{item-bead-process-nat} of Theorem~\ref{thm-bead-process-law}.  
For this purpose we will consider the following setup. 
  
For $u \geq 0$, let $  K_u^\nat \subset \ol{\BB H}$ be the hull which is the closure of the set of points in $\BB H$ which are disconnected from $\infty$ by $ \eta^\nat([0,u])$ and define the quantum surfaces
\eqb \label{eqn-bead-hull-surface}
\mcl K_u^\nat := \left(  K_u^\nat , h^\nat|_{ K_u^\nat} , 0  , \eta^\nat(u)   \right)   \quad \op{and} \quad
\mcl W_u^\nat := \left( \BB H\setminus K_u^\nat , h^\nat |_{\BB H\setminus K_u^\nat} , \eta^\nat(u) , \infty \right) .
\eqe 
The surface $\mcl K_u^\nat$ is typically a beaded quantum surface, since $\eta^\nat$ has cut points (these cut points correspond to the marked points of the beads). The surface $\mcl W_u^\nat$ has the topology of the disk and is the same as the quantum surface defined in Theorem~\ref{thm-sle-bead-nat}.  See Figure~\ref{fig-sle6-surface} for an illustration of these surfaces.

\begin{figure}[ht!]
\begin{center}
\includegraphics[scale=.8]{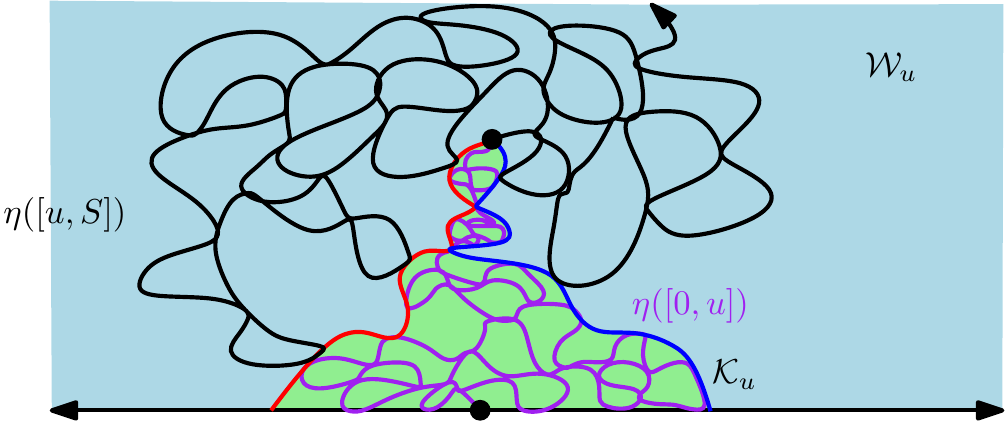}
\end{center}
\caption{
Illustration of the surfaces $\mcl K_u^\nat$ (green) and $\mcl W_u^\nat$ (blue) defined in~\eqref{eqn-bead-hull-surface}. The left and right outer boundaries of the purple curve segment $\eta^\nat([0,u])$ are shown in red and blue, respectively.
 }\label{fig-sle6-surface} 
\end{figure}

We observe that $Z^\nat|_{[0,u]}$ is determined by the curve-decorated quantum surface $( \mcl K_u^\nat,  \eta^\nat_{ \mcl K_u^\nat})$ (we can determine the arc of $\bdy \mcl K_u^\nat$ corresponding to $\bdy  K_u^\nat \setminus \BB R$ since this is the arc whose points are all hit by $\eta^\nat$).
Our goal is to describe the laws of $\mcl W_u^\nat$ and $(  \mcl K_u^\nat, \eta^\nat_{\mcl K_u^\nat})$ (and hence also the law of $Z^\nat$). The latter law will be described by comparison to the analogous objects for an SLE$_6$ on a $\sqrt{8/3}$-quantum wedge. 

To this end, let $(\BB H ,h^\wge , 0, \infty)$ be a $\sqrt{8/3}$-quantum wedge and let $\eta^\wge$ be a chordal SLE$_6$ from~$0$ to $\infty$ in~$\BB H$ parameterized by $\sqrt{8/3}$-quantum natural time with respect to~$h^\wge$.  Let $Z^\wge = (L^\wge , R^\wge) : [0,\infty) \rta \BB R^2$ be the left/right quantum boundary length process for $h^\wge$, defined in an analogous manner to the process $Z^\nat$ above. By~\cite[Corollary~1.19]{wedges}, $Z^\wge$ is a pair of independent totally asymmetric $3/2$-stable processes with no upward jumps.  We define the time $S^\wge$ as in~\eqref{eqn-S^wge-def} for the process $Z^\wge$. 
 
As in~\eqref{eqn-bead-hull-surface}, for $u \geq 0$ let $  K_u^\wge \subset \ol{\BB H}$ be the hull which is the closure of the set of points in $\BB H$ which are disconnected from $\infty$ by $\eta^\wge([0,u])$ and in analogy with~\eqref{eqn-bead-hull-surface} define the quantum surfaces 
\eqb \label{eqn-wge-hull-surface}
\mcl K_u^\wge := \left(  K_u^\wge , h^\wge|_{ K_u^\wge} , 0 , \eta^\wge(u)   \right)  \quad \op{and} \quad
\mcl W_u^\wge := \left(  \BB H\setminus K_u^\wge , h^\wge|_{ \BB H\setminus K_u^\wge} , \eta^\wge(u) , \infty   \right)  .
\eqe 
The main goal of the remainder of this subsection is to prove the following proposition, which will immediately imply both Theorem~\ref{thm-sle-bead-nat} and assertion~\ref{item-bead-process-nat} of Theorem~\ref{thm-bead-process-law}.

\begin{thm} \label{thm-wge-bead-rn}
For each $u \geq 0$, the law of the curve-decorated quantum surface $(\mcl K_u^\nat , \eta^\nat_{\mcl K_u^\nat})$ restricted to the event $  \{ u  < S^\nat\}$ (i.e., the event that $\eta^\nat$ has not reached $\infty$ by time $u$) is absolutely continuous with respect to the law of $(\mcl K_u^\wge , \eta^\nat_{\mcl K_u^\wge})$, with Radon-Nikodym derivative given by
\eqb \label{eqn-wge-bead-rn}
  \left( \frac{L_u^\wge + R_u^\wge}{\frk l^L + \frk l^R} + 1 \right)^{-5/2}\BB 1_{(u < S^\wge) }  .
\eqe 
Furthermore, the conditional law of $\mcl W_u^\nat$ given $(\mcl K_u^\nat , \eta^\nat_{\mcl K_u^\nat})$ is that of a doubly marked quantum disk with left/right boundary lengths $L_u^\nat + \frk l^L$ and $R_u^\nat + \frk l^R$.  
\end{thm}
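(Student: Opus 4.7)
We follow the coupling strategy outlined in the introduction. Starting from a $\sqrt{8/3}$-quantum wedge $(\BB H, h^\wge, 0, \infty)$ with its independent chordal SLE$_6$ curve $\eta^\wge$ parameterized by quantum natural time, sample an additional independent chordal SLE$_6$ curve $\eta^\sfi$ from the boundary point $x \in \BB R$ characterized by $\nu_{h^\wge}([x, 0]) = \frk l^L$ to $\infty$. For small $\ep > 0$, let $F_\ep$ be the event that the connected component of $\BB H \setminus \eta^\sfi$ having $0$ on its boundary---doubly marked by $0$ and the point $z$ where $\eta^\sfi$ closes it off---has left and right $\sqrt{8/3}$-quantum boundary lengths within $\ep$ of $\frk l^L$ and $\frk l^R$. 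By the identification, in the $\gamma = \sqrt{8/3}$ case, of a single bead of a $\sqrt{6}$-quantum wedge with a doubly marked quantum disk~\cite[Sections~4.4--4.5]{wedges}, together with the description of SLE$_6$-decorated quantum wedges in~\cite[Section~6]{wedges}, the conditional law given $F_\ep$ of the doubly marked quantum surface $\mcl D^\sfi$ associated with this bubble converges to that of $\mcl B$ as $\ep \to 0$. By the locality of SLE$_6$, conditional on $\eta^\sfi$ and $F_\ep$, the restriction of $\eta^\wge$ to $\mcl D^\sfi$ (up to its exit time from this bubble) is an independent chordal SLE$_6$ in $\mcl D^\sfi$ from $0$ to $z$; hence as $\ep\to 0$ the law of $(\mcl K_u^\wge, \eta^\wge_{\mcl K_u^\wge})$ under $\mathbb{P}^\wge[\,\cdot \mid F_\ep]$ converges to the law of $(\mcl K_u^\nat, \eta^\nat_{\mcl K_u^\nat})$ restricted to $\{u < S^\nat\}$.

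The key quantitative step is to compute, on $\{u < S^\wge\}$, the ratio of the conditional probability of $F_\ep$ given $(\mcl K_u^\wge, \eta^\wge_{\mcl K_u^\wge})$ (together with the quantum surfaces of the bubbles disconnected by $\eta^\wge|_{[0,u]}$, which together determine $Z^\wge|_{[0,u]}$) to the unconditional probability of $F_\ep$. On this event the point $x$ still lies on $\bdy(\BB H\setminus K_u^\wge)$, so by the locality of SLE$_6$, the portion of $\eta^\sfi$ up to its first hitting time of $K_u^\wge$ is a chordal SLE$_6$ from $x$ to $\infty$ in the slit domain $\BB H \setminus K_u^\wge$. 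A direct arc-length bookkeeping shows that the $\bdy(\BB H\setminus K_u^\wge)$-arc from $x$ through $\eta^\wge(u)$ to $z$ has total $\sqrt{8/3}$-quantum length equal to $(\frk l^L + \frk l^R) + (L_u^\wge + R_u^\wge)$: the excess $L_u^\wge + R_u^\wge$ is exactly the net change in the left and right boundary lengths of the unbounded component caused by running $\eta^\wge$ up to time $u$. Consequently, the bubble that $\eta^\sfi$ must produce in order to witness (the natural extension to this setting of) $F_\ep$ has total boundary length $(\frk l^L + \frk l^R) + (L_u^\wge + R_u^\wge) + o_\ep(1)$. Since the downward jump of the right coordinate of $\eta^\sfi$'s left/right quantum boundary length process at the moment the bubble is disconnected has $3/2$-stable L\'evy density proportional to $|t|^{-5/2}\BB 1_{(t<0)}$ by~\cite[Corollary~1.19]{wedges}, and the conditional distribution of the partition of the bubble boundary between its $\eta^\sfi$-arc and its $\bdy(\BB H\setminus K_u^\wge)$-arc, given the total jump size, agrees in the conditional and unconditional settings by local absolute continuity, the ratio of conditional to unconditional probability of $F_\ep$ tends as $\ep\to 0$ to
\eqbn
\frac{((\frk l^L + \frk l^R) + (L_u^\wge + R_u^\wge))^{-5/2}}{(\frk l^L + \frk l^R)^{-5/2}} = \left(\frac{L_u^\wge + R_u^\wge}{\frk l^L + \frk l^R} + 1\right)^{-5/2}.
\eqen

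Both conclusions of Theorem~\ref{thm-wge-bead-rn} now follow by Bayes' rule applied to a pair $(\varphi,\psi)$ of bounded test functions of $(\mcl K_u^\wge,\eta^\wge_{\mcl K_u^\wge})$ and $\mcl W_u^\wge$ respectively. The Radon-Nikodym derivative formula is exactly the ratio computed above, with indicator $\BB 1_{(u<S^\wge)}$ tracking the event on which the coupling makes sense. Simultaneously, conditional on $(\mcl K_u^\wge, \eta^\wge_{\mcl K_u^\wge})$ and on $F_\ep$, the residual surface $\mcl D^\sfi \setminus K_u^\wge$ is in the $\ep\to 0$ limit conditionally independent of $(\mcl K_u^\wge, \eta^\wge_{\mcl K_u^\wge})$ given its two boundary lengths $L_u^\wge + \frk l^L$ and $R_u^\wge + \frk l^R$, and has the law of a doubly marked quantum disk with those lengths---this is a direct consequence of the wedge analog of Theorem~\ref{thm-sle-bead-nat} from~\cite[Section~6]{wedges} applied inside $\mcl D^\sfi$. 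The main technical obstacle is making these $\ep\to 0$ limits rigorous: producing matching asymptotic constants for $\mathbb{P}^\wge[F_\ep]$ and its conditional version (so that the ratio is precisely as stated, not merely up to a constant) and verifying that the event ``$\eta^\wge|_{[0,u]}\subset \mcl D^\sfi$'' has conditional probability tending to $1$ as $\ep\to 0$ on $\{u<S^\wge\}$. Both are handled by local absolute continuity of the GFF near the boundary of the bubble (cf.~\cite[Proposition~4.7]{wedges}) together with continuity of the law of $\mcl D^\sfi$ as a function of its boundary lengths.
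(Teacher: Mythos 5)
Your proposal follows the same coupling strategy as the paper: introduce an auxiliary chordal SLE$_6$ $\eta^\sfi$ started $\frk l^L$ units of quantum length to the left of~$0$, condition on an approximate-disconnection event $F_\ep$ that produces (in the $\ep\rta 0$ limit) a doubly marked quantum disk with the prescribed boundary lengths, use locality of SLE$_6$ and the $3/2$-stable description of the auxiliary boundary length process to compare the conditional and unconditional probabilities of $F_\ep$, and finish with Bayes' rule. The arc-length bookkeeping giving the total required boundary length $(\frk l^L+\frk l^R)+(L_u^\wge+R_u^\wge)$ and the identification of the $(\cdot)^{-5/2}$ factor with the stable L\'evy density are both correct and match the paper's computation.

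However, there are two places where the execution as written would not go through without further work, and the paper's proof is structured specifically to avoid them. First, your event $F_\ep$ (a two-sided constraint that both boundary lengths of $\mcl D^\sfi$ lie within $\ep$ of their targets) involves not only the jump size $R^\sfi_{\sigma}-R^\sfi_{\sigma^-}$ but also the pre-jump value $R^\sfi_{\sigma^-}$, whose density near $0$ depends on the starting configuration. Your claim that ``the conditional distribution of the partition of the bubble boundary $\dots$ agrees in the conditional and unconditional settings by local absolute continuity'' is not adequate: local absolute continuity does not give matching first-order densities, and without an exact match the non-universal constant in $\BB P[F_\ep]$ would not cancel against the one in its conditional counterpart. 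The paper avoids this entirely by defining $F_\ep(r)$ via the first large jump time $T_\ep$ and a high-probability regularity event $F_\ep^0$, so that Lemma~\ref{lem-cont-peel-prob} gives the \emph{exact same} universal constant $\frac32\ep^{5/2}r^{-5/2}(1+o_\ep(1))$ for both the unconditional and the conditional versions (the latter with $r$ replaced by $\frk l^L+\frk l^R+L_u^\wge+R_u^\wge$), and the proof of~\eqref{eqn-cont-peel-event-show0} makes the identification of the two events literal rather than approximate. Second, to apply the $3/2$-stable jump calculation to the conditional SLE$_6$ in the unexplored region $\mcl W_u^\wge$, you implicitly need that $(\mcl K_u^\wge,\eta^\wge_{\mcl K_u^\wge})$ and $(\mcl W_u^\wge,\eta^\wge_{\mcl W_u^\wge})$ are independent; this is Lemma~\ref{lem-wge-ind}, which the paper proves via~\cite[Theorems~1.16 and~1.18]{wedges} and the peanosphere coupling, not via local absolute continuity of the GFF. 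Relatedly, the paper truncates by the regularity events $E_\delta^\wge(u)$, $E_\delta^\nat(u)$ and sends $\delta\rta 0$ at the end, which is what makes the Bayes'-rule ratio and the eventual passage from $E_\delta^\nat(u)$ to $\{u<S^\nat\}$ rigorous; your sketch skips this step, leaving the $\ep\rta 0$ and $u<\wh S$ bookkeeping unaddressed.
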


Before giving the proof of Theorem~\ref{thm-wge-bead-rn}, we use it to deduce Theorems~\ref{thm-sle-bead-nat} and~\ref{thm-bead-process-law}. 

\begin{proof}[Proof of Theorem~\ref{thm-sle-bead-nat} assuming Theorem~\ref{thm-wge-bead-rn}]
By~\cite[Theorem~1.18]{wedges} we know that the conditional law given $Z^\wge |_{[0,u]}$ of the singly marked quantum surfaces parameterized by the bubbles disconnected from $\infty$ by $\eta^\wge([0,u])$ is that of a collection of independent quantum disks with boundary lengths given by the magnitudes of the downward jumps of the coordinates of $Z^\wge$ up to time $u$. By combining this with Theorem~\ref{thm-wge-bead-rn}, we obtain that the conditional law given $Z^\nat |_{[0,u]}$ of the singly marked quantum surfaces parameterized by the bubbles disconnected from $\infty$ by $\eta^\nat([0,u])$ is that of a collection of independent quantum disks with boundary lengths given by the magnitudes of the downward jumps of the coordinates of $Z^\nat$ up to time $u$. This together with the last statement of Theorem~\ref{thm-wge-bead-rn} yields Theorem~\ref{thm-sle-bead-nat}. 
\end{proof}

\begin{proof}[Proof of Theorem~\ref{thm-bead-process-law} assuming Theorem~\ref{thm-wge-bead-rn}]
Assertion~\ref{item-bead-process-time} was proven in Lemma~\ref{lem-terminal-time}. Since $Z^\nat|_{[0,u]}$ on the event $\{S^\nat  <u\}$ and $Z^\wge|_{[0,u]}$ on the event $\{S^\wge < u\}$ are given by the same deterministic functional of the curve-decorated quantum surfaces $(\mcl K_u^\nat , \eta^\nat_{\mcl K_u^\nat})$ and $(\mcl K_u^\wge , \eta^\nat_{\mcl K_u^\wge})$, respectively (here we note that a L\'evy process is a.s.\ determined by its jumps), assertion~\ref{item-bead-process-nat} is immediate from Theorem~\ref{thm-wge-bead-rn}.  
\end{proof}

\begin{figure}[ht!]
\begin{center}
\includegraphics[scale=.8]{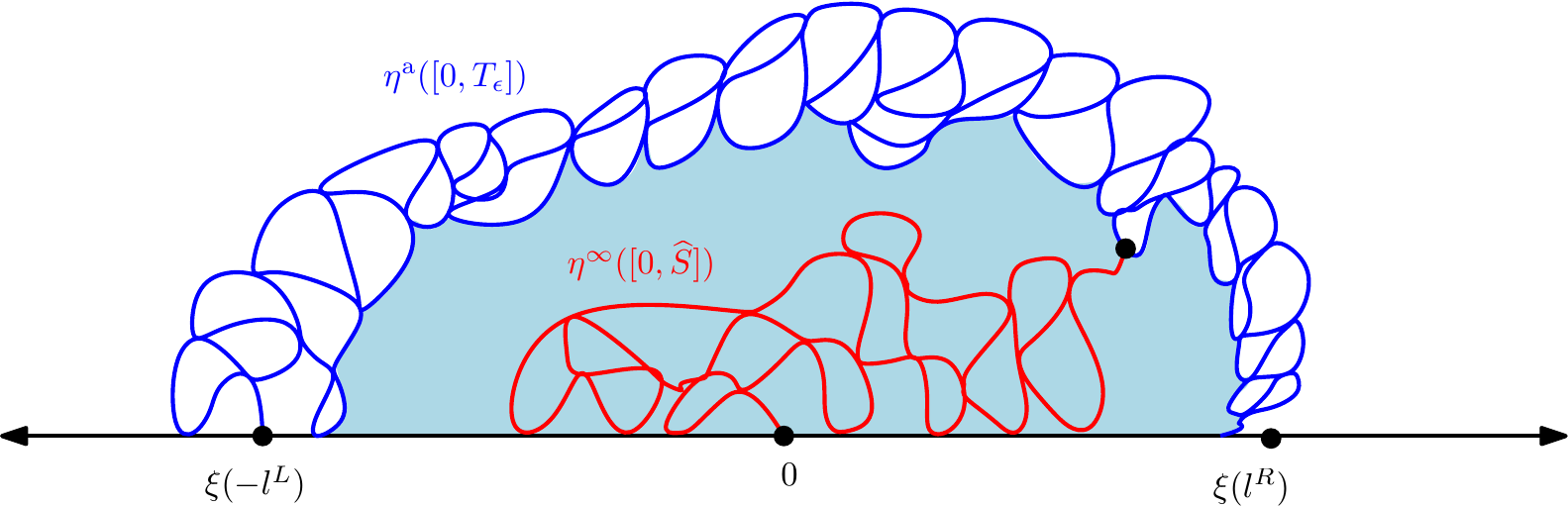}
\end{center}
\caption{
Illustration of the proof of Theorem~\ref{thm-wge-bead-rn}. On the event $F_\ep(\frk l^L + \frk l^R)$, the auxiliary SLE$_6$ curve $\eta^\sfi$ (blue) forms a bubble $D_\ep$ (light blue region) which disconnects 0 from $\infty$ such that $\bdy D_\ep\cap \BB R$ is close to the interval $[\xi(-\frk l^L) , \xi(\frk l^R)]$ which has $\frk l^L$ units of $\nu_{h^\wge}$-length to the left of 0 and $\frk l^R$ units of $\nu_{h^\wge}$-length to the right of 0; and $\nu_{h^\wge}(\bdy D_\ep \setminus \BB R)$ is small. 
In the limit as $\ep\rta 0$, the quantum surface parameterized by $D_\ep$ and the SLE$_6$ curve $\eta^\wge$ (red), stopped at the first time $\wh S$ it hits $\eta^\sfi([0,T_\ep])$, converge in law to the curve-decorated quantum surface $(\mcl B, \eta^\nat_{\mcl B})$ appearing in Theorem~\ref{thm-wge-bead-rn} in a rather strong sense (Lemma~\ref{lem-cont-peel-conv}). 
Since the  left/right boundary length process $Z^\sfi$ for $\eta^\sfi$ evolves as a pair of independent $3/2$-stable processes with no upward jumps, we can compute the probability of $F_\ep(\frk l^L +\frk l^R)$ and the conditional probability of $F_\ep(\frk l^L + \frk l^R) \cap \{\wh S < u\}$ given the curve-decorated quantum surface $(\mcl K_u^\wge , \eta^\nat_{\mcl K_u^\wge})$ corresponding to an initial segment of $\eta^\wge$ (Lemma~\ref{lem-cont-peel-prob}). From these computations together with Bayes' rule, we obtain the Radon-Nikodym derivative of the conditional law of $(\mcl K_u^\wge , \eta^\nat_{\mcl K_u^\wge})$ given $F_\ep(\frk l^L +\frk l^R) \cap \{\wh S <u\}$ with respect to the marginal law of $(\mcl K_u^\wge , \eta^\nat_{\mcl K_u^\wge})$ (Lemma~\ref{lem-cont-peel-rn}). Taking a limit as $\ep\rta0$ yields the theorem statement. 
 }\label{fig-bead-process-law} 
\end{figure} 

\subsubsection{Auxiliary SLE$_6$ curve and disconnection event}

In the remainder of this subsection we give the proof of Theorem~\ref{thm-wge-bead-rn}. See Figure~\ref{fig-bead-process-law} for an illustration. 

Let $(\BB H , h^\wge , 0,\infty)$ be the $\sqrt{8/3}$-quantum wedge from above. 
Let $\xi : \BB R\rta \BB R$ be the increasing function which parameterizes $\BB R$ according to quantum length with respect to $h^\wge$ with $\xi(0) = 0$, so that $\nu_{h^\wge}(\xi([a,b])) = b-a$ for each $a,b\in\BB R$ with $a<b$. 

The idea of the proof of Theorem~\ref{thm-wge-bead-rn} is, roughly speaking, to condition on the ``event" that the points $\xi(-\frk l^L)$ and $\xi(\frk l^R)$ lie at distance zero from one another in a certain quantum sense, which gives us a bubble whose boundary is $\xi([-\frk l^L , \frk l^R])$ and which parameterizes a quantum surface with the law of $\mcl B$. 

More precisely, we will consider an auxiliary SLE$_6$ curve $\eta^\sfi$ started from the point $\xi(-\frk l^L)$ and condition on a certain event $F_\ep(\frk l^L + \frk l^R)$ on which the intersection with~$\BB R$ of the bubble cut off from $\infty$ by $\eta^\sfi$ is close to $\xi([-\frk l^L, \frk l^R])$ and the quantum length of the boundary segment of this bubble which is not part of~$\BB R$ is small. Taking a limit as $\ep\rta 0$ produces a quantum surface with the law of $\mcl B$.  The locality property of SLE$_6$ and together with~\cite[Theorem~1.18]{wedges} will allow us to compute the approximate conditional probability of $F_\ep(\frk l^L + \frk l^R)$ given $(\mcl K_u^\wge , \eta^\wge_{\mcl K_u^\wge})$ and $  \{ u  < S^\nat\}$ for every $u \geq 0$; this computation together with an application of Bayes' rule will imply Theorem~\ref{thm-wge-bead-rn}. 
 
We now proceed with the proof of Theorem~\ref{thm-wge-bead-rn}. We start by introducing our auxiliary SLE$_6$ and the event we will condition on.  Let the $\sqrt{8/3}$-quantum wedge $(\BB H , h^\wge , 0, \infty)$, the SLE$_6$ curve $\eta^\wge$, and the boundary length parameterization $\xi$ be as above.  Conditional on $h^\wge$ and $\eta^\wge$, let $\eta^\sfi$ be a chordal SLE$_6$ from $\xi(-\frk l^L)$ to~$\infty$ in~$\BB H$, parameterized by quantum natural time with respect to~$h^\wge$ and let $Z^\sfi = (L^\sfi , R^\sfi)$ be its left/right boundary length process, so that by~\cite[Corollary~1.19]{wedges}, $Z^\sfi$ is a pair of independent totally asymmetric $3/2$-stable processes with no upward jumps (the $\sfi$ stands for ``auxiliary"). 
For $u \geq 0$, define
\eqbn
 L_{u^-}^\sfi  := \lim_{v\rta u^-} L_v^\sfi  \quad \op{and} \quad  R_{u^-}^\sfi := \lim_{v\rta u^-} R_v^\sfi .
\eqen

For $\ep >  0$, let $T_\ep $ be the first time at which $\eta^\sfi$ disconnects a bubble from $\infty$ to its right with quantum boundary length at least $\ep$, equivalently 
\eqb \label{eqn-sfi-bubble-time}
T_\ep = \inf\left\{ u \geq 0 : R_u^\sfi - R_{u^-}^\sfi \leq -\ep \right\} .
\eqe 
Also let $D_\ep$ be the bubble disconnected from $\infty$ by $\eta^\sfi$ at time $T_\ep$, so that $\nu_{h^\wge}(\bdy D_\ep) =   R_{T_\ep^-}^\sfi - R_{T_\ep}^\sfi $.

Fix a small parameter $\zeta \in (0,1) $ and for $\ep > 0$ and $r>0$, define events 
\eqb \label{eqn-cont-peel-event}
F_\ep^0  := \left\{ \sup_{u \in [0,T_\ep)} | R_u^\sfi | \leq \ep^{1-\zeta}   \right\} \quad \op{and} \quad
F_\ep(r) := F_\ep^0  \cap \left\{  -r -\ep \leq R_{T_\ep}^\sfi - R_{T_\ep^-}^\sfi   \leq  -r   \right\} .
\eqe 
On $F_\ep (r)$, the curve $\eta^\sfi$ does not disconnect $\xi(-\frk l^L + \ep^{1-\zeta})$ from $\infty$ before time $T_\ep$, and the quantum length of the right outer boundary of $\eta^\sfi([0,u])$ for $u  < T_\ep$ is given by $R_u^\sfi - \inf_{v \in [0,u]} R_v^\sfi \leq 2\ep^{1-\zeta}$. Consequently, on $F_\ep(r)$ it holds that  
\eqb \label{eqn-cont-peel-property}
\nu_h(\bdy D_\ep) \in [r , r + \ep  ] ,\quad 
\left[\xi(-\frk l^L +\ep^{1-\zeta}) , \xi(r-\frk l^L  - 2\ep^{1-\zeta} ) \right] \subset \bdy D_\ep , \quad \op{and} \quad 
\nu_h(\bdy D_\ep \setminus \BB R)   \leq 2\ep^{1-\zeta} .
\eqe

We will now estimate the probability of $F_\ep(r)$. 

\begin{lem} \label{lem-cont-peel-prob}
We have
\eqbn
\BB P\left[ F_\ep(r)  \right] =  \left(\frac32 + o_\ep(1) \right) \ep^{5/2} r^{-5/2} 
\eqen
with the rate of the $o_\ep(1)$ uniform for $r\geq r_0$ for any fixed $ r_0$. 
\end{lem}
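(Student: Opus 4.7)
The plan is to exploit the independence of the small and large jumps of $R^\sfi$. By~\cite[Corollary~1.19]{wedges} together with translation invariance of the $\sqrt{8/3}$-quantum wedge~\cite[Proposition~1.7]{shef-zipper}, $R^\sfi$ is a totally asymmetric $3/2$-stable process with no upward jumps and L\'evy measure $c|t|^{-5/2}\BB 1_{(t<0)}\,dt$. I would decompose $R^\sfi = R^{<\ep} + R^{\geq\ep}$ as the independent sum of the L\'evy process $R^{<\ep}$ containing only jumps of magnitude less than $\ep$ and the compound Poisson process $R^{\geq\ep}$ containing the jumps of magnitude at least $\ep$. The jump rate of $R^{\geq\ep}$ is $\lambda_\ep := c\int_\ep^\infty s^{-5/2}\,ds = \tfrac{2c}{3}\ep^{-3/2}$, so $T_\ep$ is exponential of rate $\lambda_\ep$, independent of $R^{<\ep}$, and independent of the first jump size $\Delta := R^\sfi_{T_\ep} - R^\sfi_{T_\ep^-}$; the law of $\Delta$ has density $\lambda_\ep^{-1} c|t|^{-5/2}\BB 1_{(t\leq -\ep)}$.

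Since $R^\sfi = R^{<\ep}$ on $[0,T_\ep)$, the event $F_\ep^0$ is measurable with respect to $(R^{<\ep},T_\ep)$ and hence independent of $\Delta$. Consequently
\[
\BB P[F_\ep(r)] = \BB P[F_\ep^0]\cdot \BB P[\Delta \in [-r-\ep,-r]] .
\]
The second factor evaluates to
\[
\frac{c\int_r^{r+\ep} s^{-5/2}\,ds}{\lambda_\ep} = \frac{r^{-3/2} - (r+\ep)^{-3/2}}{\ep^{-3/2}} = \tfrac{3}{2}\,\ep^{5/2}\,r^{-5/2}\bigl(1 + O(\ep/r)\bigr),
\]
where the last equality is a Taylor expansion of $(r+\ep)^{-3/2}$ about $r$. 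This accounts for both the leading constant $3/2$ and the $o_\ep(1)$ error uniform for $r\geq r_0$, since the error $O(\ep/r)$ is $O(\ep/r_0)$ on this range.

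It remains to show $\BB P[F_\ep^0] = 1 - o_\ep(1)$. For this I would use the $3/2$-self-similarity of $R^\sfi$: under the rescaling $\tilde R_t := \ep^{-1} R^{<\ep}_{\ep^{3/2}t}$, the process $\tilde R$ is a L\'evy process whose L\'evy measure $c|t|^{-5/2}\BB 1_{(-1\leq t<0)}\,dt$ does not depend on $\ep$, and $\ep^{-3/2}T_\ep$ is exponential of rate $\lambda_\ep\,\ep^{3/2} = 2c/3$, also independent of $\ep$. Therefore
\[
\sup_{u\in[0,T_\ep)} |R^{<\ep}_u| \eqD \ep\cdot \sup_{t\in[0,\ep^{-3/2}T_\ep)} |\tilde R_t|,
\]
and the right-hand side is a fixed random variable times $\ep$, so it is $O_\ep(\ep) \ll \ep^{1-\zeta}$ in probability.

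The only substantive check is the clean decomposition $R^\sfi = R^{<\ep} + R^{\geq\ep}$ with its independence properties, together with the self-similarity argument; once these are in hand the computation reduces to a Taylor expansion, which is the main obstacle only in that it requires care to track uniformity in $r \geq r_0$.
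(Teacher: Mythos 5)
Your proof is correct and follows the same overall structure as the paper's: factor $\BB P[F_\ep(r)] = \BB P[F_\ep^0]\cdot\BB P\left[\Delta\in[-r-\ep,-r]\right]$ using the independence of the jump $\Delta$ from the pre-jump trajectory, compute the second factor exactly by integrating the conditioned L\'evy measure, and show the first factor is $1-o_\ep(1)$. The one place your argument departs from the paper is in bounding $\BB P[F_\ep^0]$: you use $3/2$-self-similarity of the stable process to write $\sup_{u<T_\ep}|R^{<\ep}_u|$ as $\ep$ times a random variable whose law is independent of $\ep$, which gives $\BB P[F_\ep^0]\to 1$ but no explicit rate. The paper instead runs a discrete-time stopping argument (the times $\tau_k$ at which $R^\sfi$ has moved by an additional $\ep$) and stochastically dominates the number of steps before $T_\ep$ by a geometric random variable, yielding the quantitative bound $\BB P[F_\ep^0]\geq 1-e^{-a\ep^{-\zeta}}$. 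For the present lemma the weaker $o_\ep(1)$ statement suffices, so both arguments work; the paper's buys an explicit exponential rate, yours buys a shorter scaling computation.

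One small hygiene point: when you split $R^\sfi$ into $R^{<\ep}$ and $R^{\geq\ep}$, the compensation term in the L\'evy--It\^o decomposition gets absorbed into $R^{<\ep}$, so $R^{<\ep}$ is not simply ``the sum of the small jumps'' but includes a drift. Your self-similarity argument still goes through because $R^{<\ep}$ equals $R^\sfi$ minus a pure compound Poisson process, both of which rescale cleanly under $t\mapsto\ep^{3/2}t$, $x\mapsto\ep^{-1}x$; it is just worth being careful that the ``L\'evy measure independent of $\ep$'' statement is about the full law of $\tilde R$, drift included.
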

\begin{proof} 
Recall that the jumps of $-R^\sfi$ are distributed as a Poisson point process on $\BB R$, with intensity measure $c y^{-5/2} \BB 1_{(y>0)} \, dy$, for $c>0$ a universal constant. 
The conditional law of the jump $R^\sfi_{T_\ep^-}  -R^\sfi_{T_\ep} $ given $\{R^\sfi_u : u < T_\ep^-\}$ is given by conditioning this intensity measure on the set $\left[\ep      ,\infty\right)$, i.e.\ this conditional law is given by 
\eqbn
\BB P\left[ R^\sfi_{T_\ep^-} -R^\sfi_{T_\ep} \in \, dy  \,|\, \{R^\sfi_u : u < T_\ep^-\} \right] =     \frac32 \ep^{ 3/2}  y^{-5/2} \BB 1_{\left( y \geq  \ep  \right) } \, dy  .
\eqen
Since the event $F_\ep^0$ is determined by $\{R^\sfi_u : u < T_\ep^-\} $, 
\alb
\BB P\left[ F_\ep(r) \,|\, F_\ep^0 \right] 
 =   \ep^{ 3/2} \left( r^{- 3/2}  -  (r+\ep)^{-3/2}   \right) 
 =  \left(\frac32 + o_\ep(1) \right) \ep^{5/2}  r^{-5/2} .
\ale 

We will complete the proof by showing that there is a universal constant $a >0$ such that 
\eqb \label{eqn-stable-initial}
\BB P\left[ F_\ep^0 \right] \geq 1 - e^{-a \ep^{-\zeta} }. 
\eqe  
To this end, let $\tau_0 =0$ and inductively for $k\in\BB N$ let $\tau_k$ be the smallest $u \geq \tau_{k-1}$ for which $|R^\sfi_u - R_{\tau_{k-1}}^\sfi| \geq \ep$. By the strong Markov property, the increments $(R^\sfi - R_{\tau_{k-1}}^\sfi)|_{[\tau_{k-1} , \tau_k]}$ for $k\in\BB N$ are i.i.d. Let $K$ be the smallest $k\in\BB N$ for which there exists $u\in [\tau_{k-1} , \tau_k]$ for which $R^\sfi_u   - R^\sfi_{u^-} \leq -\ep$. Then $T_\ep \in [\tau_{K-1} , \tau_K]$ and since $\sup_{u \in [\tau_{k-1} , \tau_k]} |R^\sfi_u - R^\sfi_{\tau_{k-1}}| \leq 2\ep$ for $k < K$, 
\eqb \label{eqn-stable-K}
\sup_{u \in [0,T_\ep)} |R^\sfi_u| \leq 2K \ep.
\eqe  
There is a $p> 0$ such that $\BB P\left[ K = k \,|\, k \geq K \right] \geq p$ for each $k\in \BB N_0$. Hence $K$ is stochastically dominated by a geometric random variable, so there exists $a > 0$ such that $\BB P[ K > (1/2) \ep^{-\zeta}] \leq e^{-a\ep^{-\zeta}}$. The bound~\eqref{eqn-stable-initial} therefore follows from~\eqref{eqn-stable-K}. 
\end{proof}

\subsubsection{Convergence of conditional laws given $F_\ep(\frk l^L+\frk l^R)$}

Let
\eqbn
\wh S := \inf\left\{ u \geq 0 : \eta^\wge(u) \in \eta^\sfi([0,\infty)) \right\} .
\eqen
On the event $F_\ep(\frk l^L + \frk l^R)$ (defined as in~\eqref{eqn-cont-peel-event} with $r = \frk l^L + \frk l^R$), let $D_\ep$ be the region disconnected from $\infty$ by $\eta^\sfi$ at time $T_\ep$ (as above). Define the doubly marked quantum surface
\eqbn
\mcl B_\ep = \left( D_\ep  , h^\wge|_{D_\ep} , 0, \eta^\sfi(T_\ep)  \right)  .
\eqen
Recall from~\eqref{eqn-wge-hull-surface} the hulls $K_u^\wge$ generated by $\eta^\wge([0,u])$ for $u \in [0,\wh S)$ define doubly marked quantum surfaces 
\eqb \label{eqn-cond-bubble-surface}
\mcl W_{\ep,u} := \left( D_\ep \setminus K_u^\wge , h^\wge |_{ D_\ep \setminus K_u^\wge} , \eta^\wge(u) , \eta^\sfi(T_\ep) \right)  .
\eqe 
Let $\mcl W_{\ep,u}$ be the degenerate single-point quantum surface for $u \geq \wh S$. 

The main reason for our interest in the quantum surfaces defined above is the following lemma.

\begin{lem} \label{lem-cont-peel-conv}
For any finite collection of times $u_1 ,\dots , u_n \geq 0$, the joint conditional law given $F_\ep(\frk l^L + \frk l^R)$ of the curve-decorated quantum surface $\left(\mcl B_\ep , \eta_{\mcl B_\ep}^\wge|_{[0,\wh S ]} \right)$, the process $Z^\wge|_{[0, \wh S ]}$, the quantum surfaces $\{\mcl W_{\ep,u_k \wedge \wh S}\}_{k\in [1,n]_{\BB Z}}$, and the curve-decorated quantum surfaces $\left\{ \left( \mcl K_{u_k \wedge \wh S}^\wge , \eta^\wge_{ \mcl K_{u_k \wedge \wh S}^\wge}  \right) \right\}_{k\in [1,n]_{\BB Z}}$ defined as in~\eqref{eqn-wge-hull-surface} (with each bead of each quantum surface equipped with an extra interior marked point sampled uniformly from its quantum measure) converges as $\ep\rta 0$ to the joint law of $(\mcl B , \eta_{\mcl B}^\nat)$, the process $Z^\nat|_{[0,S^\nat]}$, the quantum surfaces $\{\mcl W_{u_k}\}_{k\in [1,n]_{\BB Z}}$, and the curve-decorated quantum surfaces $\left\{ \left( \mcl K_{u_k }^\nat , \eta^\nat_{ \mcl K_{u_k}^\nat}  \right) \right\}_{k\in [1,n]_{\BB Z}}$ (with each bead of each surface equipped with an extra interior marked point sampled uniformly from its quantum measure) defined as in the beginning of this subsection. 
The topology of convergence is given by the topology on beaded curve-decorated quantum surfaces, as discussed at the end of Section~\ref{sec-surface-curve}, and the Skorokhod topology, as appropriate. 
\end{lem}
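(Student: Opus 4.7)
The plan is to combine the locality property of SLE$_6$ with the description in~\cite{wedges} of bubbles cut off by chordal SLE$_6$ on a $\sqrt{8/3}$-quantum wedge, then pass to the limit $\ep \to 0$ on the event $F_\ep(\frk l^L + \frk l^R)$.

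First I would use the description of bubbles disconnected from $\infty$ by $\eta^\sfi$, provided by~\cite[Theorem~1.18]{wedges} applied after re-rooting the $\sqrt{8/3}$-quantum wedge at $\xi(-\frk l^L)$ via the translation invariance~\cite[Proposition~1.7]{shef-zipper} (this is the step that requires $\gamma = \sqrt{8/3}$), to identify the conditional law of the singly marked quantum surface $(D_\ep, h^\wge|_{D_\ep}, \eta^\sfi(T_\ep))$ given its boundary length as a singly marked quantum disk. On $F_\ep(\frk l^L + \frk l^R)$, the estimates~\eqref{eqn-cont-peel-property} show that $\nu_{h^\wge}(\bdy D_\ep) \rta \frk l^L + \frk l^R$, the $\eta^\sfi$-portion of $\bdy D_\ep$ collapses to the single point $\eta^\sfi(T_\ep) \rta \xi(\frk l^R)$, and the additional boundary marked point $0 \in \bdy D_\ep$ divides $\bdy D_\ep$ into arcs whose quantum lengths tend to $\frk l^L$ and $\frk l^R$. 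Combined, these yield convergence of $\mcl B_\ep$ in law to a doubly marked quantum disk $\mcl B$ with left/right quantum boundary lengths $\frk l^L$ and $\frk l^R$, in the conformal Prokhorov-uniform topology from Section~\ref{sec-surface-curve}.

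Next I would handle the curve $\eta^\wge$ using the locality property of SLE$_6$. Conditional on $\eta^\sfi$ (and hence on $D_\ep$ and $\eta^\sfi(T_\ep)$), locality gives that $\eta^\wge|_{[0,\wh S]}$, viewed as a curve in $D_\ep$, has the law of a chordal SLE$_6$ in $D_\ep$ from $0$ targeted at $\eta^\sfi(T_\ep)$, stopped the first time it hits the $\eta^\sfi$-arc of $\bdy D_\ep$. The quantum natural time parameterization inherited from $\eta^\wge$ coincides with the quantum natural time parameterization of this SLE$_6$ with respect to $h^\wge|_{D_\ep}$ by local absolute continuity. Since chordal SLE$_6$ on a quantum surface parameterized by quantum natural time is a measurable function of the surface and independent Brownian driving data, the convergence $\mcl B_\ep \rta \mcl B$ lifts to joint convergence of $(\mcl B_\ep, \eta^\wge_{\mcl B_\ep}|_{[0,\wh S]})$ to $(\mcl B, \eta^\nat_{\mcl B})$. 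In the limit, $\wh S \rta S^\nat$ because the $\eta^\sfi$-arc of $\bdy D_\ep$ collapses to the second marked point, so hitting this arc becomes reaching the target point.

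Finally, the remaining objects $Z^\wge|_{[0,\wh S]}$, the surfaces $\mcl W_{\ep, u_k \wedge \wh S}$, and the curve-decorated surfaces $(\mcl K_{u_k \wedge \wh S}^\wge, \eta^\wge_{\mcl K_{u_k \wedge \wh S}^\wge})$ are all a.s.\ continuous functionals of $(\mcl B_\ep, \eta^\wge_{\mcl B_\ep}|_{[0,\wh S]})$ equipped with the interior marked points sampled from the quantum area measures, so joint convergence propagates. The main obstacle is showing that the hull and complementary-surface functionals really are a.s.\ continuous at the relevant limit in the conformal Prokhorov-uniform topology; this requires carefully propagating convergence of conformal embeddings to convergence of the quantum surfaces restricted to the hulls and their complements, which one handles by selecting measurable realizations as in~\cite[Section~2.2.6]{gwynne-miller-char} and using that the times $u_k \wedge \wh S$ are a.s.\ continuity points of the relevant stable processes.
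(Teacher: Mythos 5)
Your proposal is correct and follows essentially the same route as the paper, which packages the two steps you identify as Lemma~\ref{lem-aux-surface-law} (conditional law of $(\mcl B_\ep, \eta^\wge_{\mcl B_\ep}|_{[0,\wh S]})$ via~\cite[Theorem~1.18]{wedges} plus SLE$_6$ locality) and Lemma~\ref{lem-bdy-cont} (continuity of the curve-decorated quantum disk and derived objects in its left/right boundary lengths and target-arc width). The one place the paper is more explicit than your sketch is the final continuity step: rather than appealing to a.s.\ continuity of the hull/complement functionals, Lemma~\ref{lem-bdy-cont} is proven by an explicit coupling obtained from adding a constant $\frac{2}{\sqrt{8/3}}\log(\frk l_\ep/\frk l)$ to the limit field and rescaling time, so that all derived objects converge a.s.
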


For the proof of Lemma~\ref{lem-cont-peel-conv}, we first need a description of the conditional law of curve-decorated quantum surface $\left(\mcl B_\ep , \eta_{\mcl B_\ep}^\wge|_{[0,\wh S ]} \right)$ given the auxiliary boundary length process $Z^\sfi$ on the event $F_\ep(\frk l^L + \frk l^R)$. 

\begin{lem} \label{lem-aux-surface-law}
On the event $F_\ep(\frk l^L + \frk l^R)$ of~\eqref{eqn-cont-peel-event}, the following holds.
\begin{enumerate}
\item\label{item-aux-surface-full} The conditional law of the quantum surface $\mcl B_\ep$ given $Z^\sfi$ is that of a doubly marked quantum disk with left/right boundary lengths $\frk l^L + R^\sfi_{T_\ep^-}$ and $-R^\sfi_{T_\ep} - \frk l^L $. 
\item\label{item-aux-surface-curve} The conditional law of the curve $\eta^\wge|_{[0,\wh S ]}$ given $Z^\sfi$ and $h^\wge$ is that of a chordal SLE$_6$ from 0 to $\eta^\sfi(T_\ep)$ in $D_\ep$, stopped at the first time it hits $\bdy D_\ep\setminus \BB R$. 
\end{enumerate} 
\end{lem}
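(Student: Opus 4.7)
The plan is to prove the two parts separately. Part~\ref{item-aux-surface-full} will follow from~\cite[Theorem~1.18 and Corollary~1.19]{wedges} combined with the translation invariance of the $\sqrt{8/3}$-quantum wedge along its boundary (\cite[Proposition~1.7]{shef-zipper}) and a boundary-tracing computation; part~\ref{item-aux-surface-curve} will follow from the locality property of SLE$_6$.

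For part~\ref{item-aux-surface-full}, I first translate along the boundary so that $\xi(-\frk l^L)$ becomes the new origin. The translated field is still a $\sqrt{8/3}$-quantum wedge, $\eta^\sfi$ is chordal SLE$_6$ from the new origin to $\infty$ on it, and the original point $0$ now lies at quantum boundary distance $\frk l^L$ to the right of the new origin. In this frame, \cite[Corollary~1.19]{wedges} gives that $Z^\sfi$ is a pair of independent totally asymmetric $3/2$-stable processes with no upward jumps, while \cite[Theorem~1.18]{wedges} gives that, conditional on $Z^\sfi$, the singly marked quantum surface $(D_\ep, h^\wge|_{D_\ep}, \eta^\sfi(T_\ep))$ is a singly marked quantum disk with boundary length equal to the jump $R^\sfi_{T_\ep^-} - R^\sfi_{T_\ep}$. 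To upgrade this to a doubly marked quantum disk with the prescribed boundary lengths, I trace $\bdy D_\ep$ on the event $F_\ep(\frk l^L+\frk l^R)$. Writing $R^\sfi_{T_\ep^-} = X_{T_\ep^-} - Y_{T_\ep^-}$, where $X_{T_\ep^-}$ is the quantum length of the right outer boundary of $\eta^\sfi([0,T_\ep^-])$ and $Y_{T_\ep^-}$ is the total quantum length of the portion of $\BB R$ swallowed by right-side small bubbles cut off before time $T_\ep$ (which forms a single interval adjacent to $\xi(-\frk l^L)$, since consecutive right-side pinchings occur at consecutive points of $\BB R$), the left boundary arc of $D_\ep$ from the original~$0$ to $\eta^\sfi(T_\ep)$ decomposes as a segment of $\BB R$ of quantum length $\frk l^L - Y_{T_\ep^-}$ followed by the right outer boundary of $\eta^\sfi$ of length $X_{T_\ep^-}$, for a total of $\frk l^L + R^\sfi_{T_\ep^-}$; the right arc then has length $-R^\sfi_{T_\ep} - \frk l^L$. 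Since a doubly marked quantum disk with prescribed left/right boundary lengths can be described as a singly marked quantum disk together with a second boundary marked point at the corresponding quantum length distance (Appendix~\ref{sec-disk} and~\cite[Section~4.5]{wedges}), part~\ref{item-aux-surface-full} follows.

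For part~\ref{item-aux-surface-curve}, the curves $\eta^\wge$ and $\eta^\sfi$ are conditionally independent given $h^\wge$, and on $F_\ep(\frk l^L+\frk l^R)$ the connected component of $\BB H\setminus\eta^\sfi([0,T_\ep])$ containing $0$ on its boundary is exactly $D_\ep$. By the locality of SLE$_6$ together with its target-invariance, the curve $\eta^\wge$ stopped upon first hitting $\eta^\sfi([0,\infty))$ (which equals $\bdy D_\ep\setminus\BB R$ on $F_\ep(\frk l^L+\frk l^R)$) has, conditionally on $(h^\wge,\eta^\sfi)$, the same law as a chordal SLE$_6$ in $D_\ep$ from $0$ to $\eta^\sfi(T_\ep)$ stopped at the corresponding hitting event. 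The main obstacle is the boundary-length bookkeeping in part~\ref{item-aux-surface-full}: one must carefully verify that the total $\BB R$ swallowed by right-side small bubbles before time $T_\ep$ forms a single interval adjacent to $\xi(-\frk l^L)$, so that the position of the original~$0$ on $\bdy D_\ep$ is at quantum distance exactly $\frk l^L + R^\sfi_{T_\ep^-}$ from $\eta^\sfi(T_\ep)$ along the left arc.
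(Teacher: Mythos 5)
Your proposal is correct and follows essentially the same route as the paper: part~\ref{item-aux-surface-full} via~\cite[Theorem~1.18]{wedges} (together with translation invariance of the $\gamma$-quantum wedge along its boundary) to see that $(D_\ep,h^\wge|_{D_\ep},\eta^\sfi(T_\ep))$ is conditionally a singly marked quantum disk of boundary length $R^\sfi_{T_\ep^-}-R^\sfi_{T_\ep}$, then an arc-length computation to locate~$0$ on $\bdy D_\ep$, and part~\ref{item-aux-surface-curve} via locality of SLE$_6$ and the conditional independence of $\eta^\wge$ and $\eta^\sfi$ given $h^\wge$. The only real divergence is the arithmetic for the arc lengths, and the paper's version sidesteps the issue you flag as the ``main obstacle'': rather than decomposing the left arc into an $\BB R$-segment of length $\frk l^L - Y_{T_\ep^-}$ plus a curve segment of length $X_{T_\ep^-}$, the paper directly computes the \emph{other} arc, $\nu_{h^\wge}\left([0,\eta^\sfi(T_\ep)]\right)=-R^\sfi_{T_\ep}-\frk l^L$, which holds because on $F_\ep(\frk l^L+\frk l^R)$ the pinch point $\eta^\sfi(T_\ep)$ lies on $\BB R$, so at time $T_\ep$ the right outer boundary of the hull has no curve portion and hence $-R^\sfi_{T_\ep}=\nu_{h^\wge}\left([\xi(-\frk l^L),\eta^\sfi(T_\ep)]\right)$ immediately from the definition of $R^\sfi$, with no need to track the swallowed~$\BB R$ (your worry is in any case resolvable: successive right-side touches of $\BB R$ by chordal SLE$_6$ occur at increasing points, and the curve itself has $\nu_{h^\wge}$-measure zero on $\BB R$, so the swallowed right-side $\BB R$ is indeed an interval adjacent to $\xi(-\frk l^L)$ up to a null set).
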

\begin{proof}
By~\cite[Theorem~1.18]{wedges}, on $F_\ep(\frk l^L + \frk l^R)$ the conditional law given $Z^\sfi$ of the quantum surface $(D_\ep , h^\wge_{D_\ep} , \eta^\sfi(T_\ep))$ is that of a singly marked quantum disk with boundary length $\nu_{h^\wge}(\bdy D_\ep) = R^\sfi_{T_\ep^-} - R^\sfi_{T_\ep}$.  On the event $F_\ep(\frk l^L + \frk l^R)$, $\nu_{h^\wge}( [0, \eta^\sfi(T_\ep)] ) =  -R^\sfi_{T_\ep} - \frk l^L  $ is determined by $Z^\sfi$.  We thus obtain condition~\ref{item-aux-surface-full}. 
 
Assertion~\ref{item-aux-surface-curve} follows from the locality property of SLE$_6$~\cite[Theorem~6.13]{lawler-book} and the fact that $\eta^\wge$ is independent from $\eta^\sfi$.
\end{proof}

By definition of $F_\ep(\frk l^L + \frk l^R)$, one has $\frk l^L + R^\sfi_{T_\ep^-}  = \frk l^L + o_\ep(1)$, $-R^\sfi_{T_\ep} - \frk l^L   = \frk l^R + o_\ep(1)$, and $\nu_{h^\wge}(\bdy D_\ep\setminus \BB R) \ = o_\ep(1)$ on this event.
Hence Lemma~\ref{lem-cont-peel-conv} is an immediate consequence of Lemma~\ref{lem-aux-surface-law} together with the following lemma, which in turn follows easily from the scaling properties of the quantum disk.

\begin{lem} \label{lem-bdy-cont}
Let $\{(\frk l^L_\ep , \frk l^R_\ep , \delta_\ep) \}_{\ep > 0}$ be a collection of triples of positive real numbers converging to $(\frk l^L , \frk l^R , 0)$ as $\ep\rta 0$. 
For $\ep  >0$, let $\mcl B_\ep = (\BB D , h_\ep , -1,1 )$ be a doubly marked quantum disk with left boundary length $\frk l_\ep^L$ and right boundary length $\frk l_\ep^R$.
Let $x_\ep$ be the point of $\bdy \BB D$ such that the $\nu_{h_\ep}$-length of the counterclockwise arc $[1,x_\ep]_{\bdy\BB D}$ of $\bdy\BB D$ from $1$ to $x_\ep$ is $\delta_\ep$. 

Let $\eta_\ep$ be a chordal SLE$_6$ from $-1$ to $1$ in $\BB D$, sampled independently from $h_\ep$ then parameterized by quantum natural time with respect to $h_\ep$, and let $\wh S_\ep$ be the first time $\eta_\ep$ hits $[1,x_\ep]_{\bdy\BB D}$.  Also let $ Z_{\ep,u} = (L_{\ep,u} , R_{\ep,u} )$ for $u\geq 0$ be the left/right quantum boundary length process for $\wh \eta_\ep$ with respect to $\nu_{h_\ep}$ (which is defined on the same interval as $\eta^\ep$).  For $u\geq 0$, let $  K_{\ep,u}$ be the closure of the points in $\BB D$ which are disconnected from $1$ by $\wh \eta_\ep([0,u])$ and define the quantum surfaces 
\eqb \label{eqn-bdy-cont-surfaces}
\mcl K_{\ep,u} := \left( K_{\ep,u} , h_\ep|_{ K_{\ep,u} } , -1, \eta_\ep(u)   \right) \quad \op{and} \quad
\mcl W_{\ep, u} := \left( \BB D\setminus K_{\ep,u} , h_\ep|_{\BB D\setminus K_{\ep,u} } , \eta_\ep(u), 1  \right)
\eqe 
For any finite collection of times $u_1 ,\dots , u_n \geq 0$, the joint law of $\left(\mcl B_\ep , \eta_{\mcl B_\ep}^\ep|_{[0,\wh S_\ep]} \right)$, $Z_\ep|_{[0,\wh S_\ep]} $, $\{\mcl W_{\ep , u_k \wedge \wh S_\ep  }\}_{k \in [1,n]_{\BB Z} }$, and $\left\{ \left( \mcl K_{\ep , u_k \wedge \wh S_\ep}^\ep , \eta_{\ep, \mcl K_{\ep , u_k \wedge \wh S_\ep}}  \right) \right\}_{k\in [1,n]_{\BB Z}}$ (with each bead of each surface equipped with an extra interior marked point sampled uniformly from its quantum measure) converges as $\ep\rta 0$ to the joint law of $(\mcl B , \eta_{\mcl B}^\nat)$, $Z^\nat|_{[0,S^\nat]}$, $\{\mcl W_{u_k }\}_{k \in [1,n]_{\BB Z} }$ and $\left\{ \left( \mcl K_{u_k }^\nat , \eta^\nat_{ \mcl K_{u_k}^\nat}  \right) \right\}_{k\in [1,n]_{\BB Z}}$ (with each bead of each surface equipped with an extra interior marked point sampled uniformly from its quantum measure) in the topology of curve-decorated quantum surfaces and the Skorokhod topology, as appropriate. 
\end{lem}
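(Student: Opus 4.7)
I would approach the proof by constructing a single probability space on which all the $\ep$-indexed objects are defined and converge almost surely to their limits, then extract convergence in distribution. Since the SLE$_6$ curve $\eta_\ep$ is independent of $h_\ep$, I would couple all these curves to agree, as unparameterized curves in $\BB D$, with a single chordal SLE$_6$ $\eta$ from $-1$ to $1$ in $\BB D$, independent of everything else. The parameterizations (by quantum natural time with respect to $h_\ep$ versus $h$) then differ only by a time-change. This reduces the task to coupling the fields so that $h_\ep \to h$ a.s.\ in a sense strong enough to pass to the quantum area measure $\mu_{h_\ep}$, the quantum length measure $\nu_{h_\ep}$, and the natural time parameterization.

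The field coupling is built from continuity of the law of the doubly marked quantum disk in its boundary length parameters $(\frk l^L, \frk l^R)$, together with the Skorokhod representation theorem. This continuity follows in two stages. First, the LQG scaling property --- adding $\frac{1}{\gamma}\log c$ to the field multiplies both boundary lengths by $c$ --- lets us match the total boundary length $\frk l^L + \frk l^R$. Second, the explicit disk construction of Appendix~\ref{sec-disk} implies that disks with the same total but slightly different ratios of left to right boundary length have mutually absolutely continuous laws with Radon-Nikodym derivatives tending to $1$ as the ratio converges. Together these give weak continuity of the disk law in $(\frk l^L, \frk l^R)$, and Skorokhod then yields the desired a.s.\ coupling.

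Granted this coupling, I would deduce convergence of all derived quantities. The quantum natural time parameterization, being defined via local absolute continuity from the free-boundary GFF with a $\gamma/2$-log singularity and behaving like a local time at the bubble-disconnection set, depends continuously on the field, so $\eta_\ep \to \eta^\nat$ uniformly on compact time intervals. Since $\delta_\ep \to 0$ and $\nu_h$ has no atoms on $\bdy \BB D$, we obtain $x_\ep \to 1$ a.s. Combining this with the endpoint continuity $\lim_{u \to S^\nat} \eta^\nat(u) = 1$ given by Lemma~\ref{lem-terminal-time} (applied in $\BB D$ via the conformal equivalence that takes the $\BB H$-setup of that lemma to our $\BB D$-setup), we conclude $\wh S_\ep \to S^\nat$ a.s. All remaining convergences --- Skorokhod convergence of $Z_\ep|_{[0,\wh S_\ep]}$ to $Z^\nat|_{[0,S^\nat]}$, and $\BB d^{\op{CPU}}$-convergence of the slice surfaces $\mcl W_{\ep, u_k \wedge \wh S_\ep}$ and $\mcl K_{\ep, u_k \wedge \wh S_\ep}$ equipped with uniformly sampled interior marked points --- then follow by continuity.

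The main obstacle is the continuity of the doubly marked quantum disk law in its boundary length parameters, strongly enough that coupling the fields produces joint convergence not only of the disk itself but also of the slice surfaces $\mcl W_{\ep,u}$ and $\mcl K_{\ep,u}$, which are driven by the SLE and live in random sub-regions with their own conformal structures. A secondary difficulty is the Skorokhod convergence of the boundary length processes, which requires identifying the downward jumps of $Z_\ep$ with those of $Z^\nat$ via the corresponding disconnected bubbles; this relies on $\wh S_\ep \to S^\nat$ together with the strong coupling of $(h_\ep, \eta_\ep)$ with $(h, \eta^\nat)$ already established in the previous steps.
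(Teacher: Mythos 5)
Your proposal is close in spirit to the paper's proof --- both construct an explicit coupling via scaling, and both couple the SLE curves as a single unparameterized curve --- but the second step of your field-coupling argument contains a genuine error. You claim that doubly marked quantum disks with the same total boundary length but different left/right ratios have mutually absolutely continuous laws with Radon--Nikodym derivatives tending to $1$. This is false: on a fixed embedding $(\BB D, -1, 1)$, the left/right boundary lengths are determined $\nu_{h}$-measurable functionals of the field, so the laws of the field conditioned on $(\frk l^L_\ep, \frk l^R_\ep)$ vs.\ $(\frk l^L, \frk l^R)$ (with the same total, say) are \emph{mutually singular} on the event that the ratios differ, not absolutely continuous. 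There is also a circularity concern in invoking the Skorokhod representation theorem: one would first need to establish weak convergence of the $h_\ep$-laws, which is essentially what the lemma is asserting.

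The paper sidesteps this by using the re-rooting property of the quantum disk (stated after Definition~\ref{def-disk}): the law of the \emph{unmarked} quantum disk depends only on the total boundary length, and the left/right boundary lengths enter only through the position of the second marked point. Concretely, starting from $(\BB D, h^\nat, -1, 1)$, one adds the constant $\frac{2}{\sqrt{8/3}}\log(\frk l_\ep/\frk l)$ (with $\frk l := \frk l^L + \frk l^R$, $\frk l_\ep := \frk l^L_\ep + \frk l^R_\ep$) to match the total boundary length $\frk l_\ep$, and then \emph{moves} the second marked point from $1$ to the point $y_\ep \in \bdy\BB D$ such that the two boundary arcs have $\nu$-lengths exactly $\frk l^L_\ep$ and $\frk l^R_\ep$. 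The resulting doubly marked surface $(\BB D, h^\nat_\ep, -1, y_\ep)$ then has precisely the law of $\mcl B_\ep$. Since the added constant and $|y_\ep - 1|$ both tend to $0$, and the SLE and all derived objects transform deterministically under the corresponding rescaling of quantum natural time, the a.s.\ convergence of every quantity in the statement follows. You should replace your absolute-continuity step with this re-rooting construction; the rest of your outline (time-changing the SLE, using $\delta_\ep \to 0$ and the atom-free property of $\nu_h$ to get $x_\ep \to 1$ and hence $\wh S_\ep \to S^\nat$, and then reading off the convergences of the slice surfaces and the Skorokhod convergence of $Z_\ep$) is essentially what the paper does.
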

\begin{proof}
We will use the scaling property of the quantum disk to construct a coupling of the objects in the statement of the lemma such that the desired convergence conditions hold a.s.
Suppose that our SLE$_6$-decorated quantum disk $(\mcl B , \eta^\nat_{\mcl B})$ is parameterized by $(\BB D , -1, 1)$ instead of $(\BB H , 0,\infty)$, so that $h^\nat$ is a distribution on $\BB D$ and $\eta^\nat$ is an SLE$_6$ in $\BB D$ from $-1$ to $1$. 

Let $\frk l:= \frk l^L + \frk l^R$ and $\frk l_\ep := \frk l^L_\ep + \frk l^R_\ep$ for $\ep >0$.  For $\ep > 0$, let $h_\ep^\nat := h^\nat + \frac{2}{\sqrt{8/3} }  \log(\frk l_\ep / \frk l)$ and note that adding $\frac{2}{\sqrt{8/3} }  \log(\frk l_\ep / \frk l)$ has the effect of multiplying areas by $(\frk l_\ep/\frk l )^{2}$ and boundary lengths by $ \frk l_\ep /\frk l$.  Let $\eta_\ep^\nat(u) := \eta ( (\frk l_\ep/\frk l)^{-3/2} u  )$. By the scaling property of quantum natural time (established in~\cite[Section 6.2]{sphere-constructions}) it follows that $\eta_\ep^\nat $ is parameterized by quantum natural time with respect to $h_\ep^\nat $. Define $Z_{\ep,u}^\nat :=  (\frk l_\ep/\frk l) Z^\nat_{ (\frk l_\ep/\frk l)^{-3/2} u}$, so that $Z_{\ep,u}^\nat$ is the left/right quantum boundary length process for $\eta_\ep^\nat$ with respect to $h_\ep^\nat$. 
 
Let $y_\ep^\nat$ (resp.\ $x_\ep^\nat$) be chosen so that the $\nu_{h_\ep^\nat}$-length of the arc $[-1 , y_\ep]_{\bdy \BB D}$ (resp.\ $[-1,x_\ep]_{\bdy\BB D}$) is $\frk l_\ep^R$ (resp.\ $\frk l_\ep^R + \delta_\ep$). By our choice of $\frk l_\ep$, we have $\nu_{h_\ep^\nat}([y_\ep , -1]_{\bdy\BB D}) = \frk l_\ep^L$.  Also let $\wh S_\ep^\nat$ be the first time at which $\eta_\ep^\nat$ hits $[y_\ep ,x_\ep]_{\bdy\BB D}$. 

Define the quantum surface $\mcl B_\ep^\nat = \left( \BB D , h_\ep^\nat  ,  -1 , y_\ep \right)$.  Then the joint law of the curve-decorated quantum surface $\left(\mcl B_\ep^\nat , \eta_{\ep,\mcl B_\ep^\nat}^\nat |_{[0,\wh S_\ep^\nat]} \right)$ and the process $Z^{ \nat}_\ep |_{[0,\wh S_\ep^\nat]}$ is the same as the joint law of the objects described in the statement of the lemma.
It we define the surfaces as in~\eqref{eqn-bdy-cont-surfaces} (but with $y_\ep$ in place of 1), then since $(\frk l^L_\ep , \frk l^R_\ep , \delta_\ep) \rta (\frk l^L , \frk l^R , 0)$ it is readily verified that the convergence conditions in the statement of the lemma are satisfied.
\end{proof}

\subsubsection{Radon-Nikodym derivative estimate and conclusion of the proof}

In the next lemma, we will use Bayes' rule to determine the conditional law of the quantum surface $\mcl W_{\ep,u}$ (defined in~\eqref{eqn-cond-bubble-surface}) and the curve-decorated quantum surface $(\mcl K_u^\wge , \eta_{\mcl K_u^\wge}^\wge)$ (defined in~\eqref{eqn-wge-hull-surface}) given the event $F_\ep(\frk l^L +\frk l^R)$ and a regularity event which we now define. 
For $\delta>0$, let 
\eqb \label{eqn-reg-event}
E_\delta^\wge(u) := \left\{ \inf_{v\in [0,u]} L^\wge_v \geq -\frk l^L +\delta \right\} \cap \left\{ \inf_{v\in [0,u]} R^\wge_v \geq -\frk l^R +\delta \right\} = \left\{  K_u^\wge \subset [\xi(-\frk l^L+\delta) , \xi(\frk l^R - \delta)] \right\} .
\eqe 
We also define the analog of $E_\delta^\wge(u)$ for the curve-decorated quantum surface $(\mcl B  , \eta^\nat_{\mcl B})$ and the corresponding process $Z^\nat$,  
\eqb \label{eqn-reg-event-bead}
E_\delta^\nat(u) := \left\{ \inf_{v\in [0,u]} L^\nat_v \geq -\frk l^L +\delta \right\} \cap \left\{ \inf_{v\in [0,u]} R^\nat_v \geq -\frk l^R +\delta \right\}  
\eqe 
We observe that with $S^\wge$ and $S^\nat$ the times in the statement of Theorem~\ref{thm-wge-bead-rn}, 
\eqb \label{eqn-trunc-to-time}
\bigcup_{\delta>0} E_\delta^\wge(u) = \{u < S^\wge\}   \quad \op{and} \quad \bigcup_{\delta>0} E_\delta^\nat(u) = \{u < S^\nat\}   ,  \quad \forall u > 0 
\eqe 
where in the second equality we use the formula~\eqref{eqn-S^bead-formula}.
 
\begin{lem} \label{lem-cont-peel-rn}
Let $u\geq 0$ and $\delta >0$ and let $E_\delta^\wge(u)$ and $E_\delta^\nat(u)$ be as above.
For each $\ep \in (0, (\delta/2)^{(1-\zeta)^{-1}})$, the following holds. 
\begin{enumerate} 
\item \label{item-cont-peel-unexplored} The conditional law of the quantum surface $\mcl W_{\ep,u}$ given $Z^\sfi$, $(\mcl K_u^\wge , \eta_{\mcl K_u^\wge}^\wge)$, and the event $E_\delta^\wge(u) \cap F_\ep(\frk l^L + \frk l^R)$ is that of a doubly marked quantum disk with left/right boundary lengths $\frk l^L + L_u^\wge - R^\sfi_{T_\ep^-} $ and $-R^\sfi_{T_\ep} - \frk l^L + R_u^\wge $. 
\item \label{eqn-cont-peel-rn} The conditional law of the curve-decorated quantum surface $(\mcl K_u^\wge , \eta_{\mcl K_u^\wge}^\wge)$ given $ E_\delta^\wge(u)\cap \{u < \wh S\}   \cap F_\ep(\frk l^L + \frk l^R)$ is absolutely continuous with respect to its conditional law given only $E_\delta^\wge(u)$, and the Radon-Nikodym derivative takes the form
\eqb 
(1 +o_\ep(1)) \BB P\left[ E_\delta^\nat(u)   \right]^{-1} \left( \frac{L_u^\wge + R_u^\wge}{\frk l^L + \frk l^R} + 1 \right)^{-5/2}\BB 1_{E_\delta^\wge(u)} 
\eqe 
where here the rate of the~$o_\ep(1)$ depends only on~$\delta$, $\frk l^L$, and~$\frk l^R$. \label{item-cont-peel-explored} 
\end{enumerate}
\end{lem}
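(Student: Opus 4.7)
Both assertions are proved by conditioning on the explored curve-decorated quantum surface $(\mcl K_u^\wge, \eta_{\mcl K_u^\wge}^\wge)$ on $E_\delta^\wge(u)$, using the Markov property of the $\sqrt{8/3}$-quantum wedge under SLE$_6$ exploration (from~\cite{wedges}) to recognize $\BB H \setminus K_u^\wge$ as a fresh $\sqrt{8/3}$-quantum wedge, invoking translation invariance of the wedge along its boundary (\cite[Proposition~1.7]{shef-zipper}) to re-anchor it at $\xi(-\frk l^L)$, and then applying Lemma~\ref{lem-aux-surface-law} and Lemma~\ref{lem-cont-peel-prob} in this new wedge with boundary length parameters shifted by $(L_u^\wge, R_u^\wge)$. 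The constraint $\ep^{1-\zeta} < \delta/2$ together with~\eqref{eqn-cont-peel-property} ensures that on $E_\delta^\wge(u) \cap F_\ep(\frk l^L + \frk l^R)$ the $\BB R$-base of $K_u^\wge$ lies strictly inside the $\BB R$-portion of $\bdy D_\ep$, so that $K_u^\wge \subset D_\ep$ and $\{u < \wh S\}$ hold automatically (the small $\eta^\sfi$-boundary of $D_\ep$ is controlled by $F_\ep^0$ and does not touch $K_u^\wge$).

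For assertion~\ref{item-cont-peel-unexplored}, the independence of $\eta^\sfi$ from $\eta^\wge$ together with the locality property of SLE$_6$ shows that conditionally on $(\mcl K_u^\wge, \eta_{\mcl K_u^\wge}^\wge)$ the initial segment $\eta^\sfi|_{[0,T_\ep]}$ is a chordal SLE$_6$ in the new wedge started from $\xi(-\frk l^L)$. Applying Lemma~\ref{lem-aux-surface-law}(1) in the new wedge (with $\frk l^L$ replaced by $\frk l^L + L_u^\wge$ and $\frk l^R$ replaced by $\frk l^R + R_u^\wge$) then identifies $\mcl W_{\ep,u} = D_\ep \setminus K_u^\wge$ as a doubly marked quantum disk with the claimed boundary lengths, after the elementary observation that the right boundary length process of $\eta^\sfi$ in the new wedge agrees with $R^\sfi$ up to time $T_\ep$ (since on $F_\ep^0 \cap E_\delta^\wge(u)$ the bubbles cut off by $\eta^\sfi$ before $T_\ep$ are small and disjoint from $K_u^\wge$).

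For assertion~\ref{item-cont-peel-explored}, Bayes' rule gives the Radon-Nikodym derivative as proportional to $\BB P[\{u<\wh S\} \cap F_\ep \,|\, (\mcl K_u^\wge, \eta_{\mcl K_u^\wge}^\wge)]\,\BB 1_{E_\delta^\wge(u)}$. The decisive input is the quantum boundary length bookkeeping identity
\eqbn
\nu_{h^\wge}(\bdy(D_\ep \setminus K_u^\wge)) = \nu_{h^\wge}(\bdy D_\ep) + L_u^\wge + R_u^\wge + o_\ep(1) \quad \text{on } E_\delta^\wge(u) \cap F_\ep(\frk l^L + \frk l^R) ,
\eqen
which identifies $L_u^\wge + R_u^\wge$ with the quantum length exchange (outer $\BB H$-boundary of $K_u^\wge$) minus ($\BB R$-base of $K_u^\wge$) that arises upon passing from $\bdy D_\ep$ to $\bdy(D_\ep \setminus K_u^\wge)$. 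Consequently, in the new wedge, $F_\ep(\frk l^L + \frk l^R) \cap \{u < \wh S\}$ is, modulo $o_\ep(1)$, the event that the first large jump of the new wedge's right boundary length process has magnitude in $[\frk l^L + \frk l^R + L_u^\wge + R_u^\wge,\,\frk l^L + \frk l^R + L_u^\wge + R_u^\wge + \ep]$. Lemma~\ref{lem-cont-peel-prob} applied in the new wedge then yields $\BB P[\{u<\wh S\} \cap F_\ep \,|\, (\mcl K_u^\wge, \eta_{\mcl K_u^\wge}^\wge)] = (1+o_\ep(1))\,\tfrac{3}{2} \ep^{5/2}(\frk l^L + \frk l^R + L_u^\wge + R_u^\wge)^{-5/2}$ on $E_\delta^\wge(u)$. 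Dividing through by the integrated denominator produces the $(1 + (L_u^\wge + R_u^\wge)/(\frk l^L + \frk l^R))^{-5/2}$ factor, with the remaining prefactor identified as $\BB P[E_\delta^\nat(u)]^{-1}$ by the requirement that the density integrate to unity.

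The main obstacle will be the rigorous derivation of the bookkeeping identity above, which amounts to showing that the only geometric difference between $\bdy D_\ep$ and $\bdy(D_\ep \setminus K_u^\wge)$ is the replacement of the $\BB R$-base of $K_u^\wge$ by the outer $\BB H$-boundary of $K_u^\wge$ (the extra $o_\ep(1)$ coming from the small arc of $\eta^\sfi$ on $\bdy D_\ep$ controlled by $F_\ep^0$). Once this geometric identity is in hand, the computation reduces to the Poissonian jump structure of the spectrally negative $3/2$-stable boundary length process, and the stated form of the Radon-Nikodym derivative follows from Lemma~\ref{lem-cont-peel-prob}.
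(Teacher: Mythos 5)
Your overall strategy matches the paper's: condition on the explored surface, use the wedge Markov property and independence (Lemma~\ref{lem-wge-ind} in the paper), apply locality of SLE$_6$, re-express the disconnection event inside the fresh wedge $\mcl W_u^\wge$, and then compute via Bayes' rule and Lemma~\ref{lem-cont-peel-prob}. The boundary-length bookkeeping identity you identify as the ``decisive input'' is indeed the crux, and it is in fact exact (no $o_\ep(1)$), since the $\eta^\sfi$-part of $\bdy D_\ep$ and of $\bdy(D_\ep \setminus K_u^\wge)$ coincide on the relevant event.

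However, there is a genuine gap in the claim that on $E_\delta^\wge(u) \cap F_\ep(\frk l^L + \frk l^R)$ one has $K_u^\wge \subset D_\ep$ and $\{u < \wh S\}$ ``automatically.'' The events $E_\delta^\wge(u)$ and $F_\ep^0$ only control \emph{quantum boundary lengths}: $E_\delta^\wge(u)$ confines $K_u^\wge \cap \BB R$ to $[\xi(-\frk l^L+\delta), \xi(\frk l^R-\delta)]$, and $F_\ep^0$ bounds $\nu_{h^\wge}(\bdy D_\ep \setminus \BB R)$ by $2\ep^{1-\zeta}$, but neither constrains the \emph{Euclidean} geometry of $\eta^\sfi$ or $\eta^\wge$ in the interior of $\BB H$. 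Nothing prevents $\eta^\sfi$ from wandering into the hull $K_u^\wge$ (hitting $\eta^\wge([0,u])$ at an interior point) while its right boundary-length process stays below $\ep^{1-\zeta}$, nor does it prevent $K_u^\wge$ from poking out through the small $\eta^\sfi$-arc of $\bdy D_\ep$. So $E_\delta^\wge(u) \cap F_\ep(\frk l^L + \frk l^R) \not\subset \{u < \wh S\}$, and the reduction to ``apply Lemma~\ref{lem-aux-surface-law} in the new wedge'' cannot be carried out directly on the event $F_\ep$ as stated, since $F_\ep$ refers to the \emph{old-wedge} process $R^\sfi$.

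The paper resolves exactly this issue by introducing the extended curve $\eta^\sfi_u$ (an SLE$_6$ from $\xi_u(-\frk l^L - L_u^\wge)$ to $\infty$ in the new wedge which agrees with $\eta^\sfi$ up to $\tau_u$), its boundary-length process $Z_{u,\cdot}^\sfi$, and the event $F_{u,\ep}$ formulated entirely in the new wedge, and then proving the set identity~\eqref{eqn-cont-peel-event-show0}:
\[
E_\delta^\wge(u) \cap \{u < \wh S\} \cap F_\ep(\frk l^L + \frk l^R) = E_\delta^\wge(u) \cap F_{u,\ep}(\frk l^L + \frk l^R + L_u^\wge + R_u^\wge).
\]
The direction $\supset$ is where the quantitative choice $\ep^{1-\zeta} < \delta/2$ is actually used: on $E_\delta^\wge(u)$ the image of $\bdy K_u^\wge$ sits at quantum distance $\geq \delta$ to the right of $\eta_u^\sfi$'s starting point, while $F_{u,\ep}^0$ caps the rightward reach of $\eta_u^\sfi$'s hull along $\BB R$ by $\ep^{1-\zeta} < \delta$, so $\eta_u^\sfi$ never touches $\bdy K_u^\wge$ and hence $\eta^\sfi = \eta_u^\sfi$ forever, which forces $\{u < \wh S\}$; the direction $\subset$ is your bookkeeping identity. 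Your proposal needs this set identity (or an equivalent reformulation) rather than asserting $\{u < \wh S\}$ outright; once that is in place, the rest of your argument agrees with the paper's.
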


For the proof of Lemma~\ref{lem-cont-peel-rn}, we need the following lemma.

\begin{lem}
\label{lem-wge-ind}
For each $u \geq 0$, the past and future curve-decorated quantum surfaces $(\mcl K_u^\wge , \eta^\wge_{\mcl K_u^\wge})$ and $(\mcl W_u^\wge , \eta^\wge_{\mcl W_u^\wge})$ defined in~\eqref{eqn-wge-hull-surface} are independent. 
\end{lem}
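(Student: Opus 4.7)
The plan is to combine the decomposition theorem \cite[Theorem~1.18]{wedges} for SLE$_{\kappa'}$-decorated quantum wedges with the conformal Markov property of SLE$_6$ and the independence of $\eta^\wge$ from $h^\wge$.

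First I would invoke \cite[Theorem~1.18]{wedges}. Since $\gamma = \sqrt{8/3}$ makes $\frac{4}{\gamma} - \frac{\gamma}{2} = \gamma$, chordal SLE$_6$ on our $\sqrt{8/3}$-quantum wedge falls within the scope of that theorem. The relevant consequence is that, for each $u \geq 0$, the doubly marked quantum surface $\mcl W_u^\wge$ is again a $\sqrt{8/3}$-quantum wedge, and that (jointly with the collection of singly marked quantum disks corresponding to the bubbles disconnected from $\infty$ by $\eta^\wge|_{[0,u]}$) it is independent of the past beaded quantum surface $\mcl K_u^\wge$ as an equivalence class modulo conformal maps.

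Next I would use the conformal Markov property of chordal SLE$_6$ to upgrade this to independence of curve-decorated quantum surfaces. Because $\eta^\wge$ is sampled independently of $h^\wge$, the continuation $\eta^\wge|_{[u,\infty)}$, conditionally on $\eta^\wge|_{[0,u]}$, is a chordal SLE$_6$ from $\eta^\wge(u)$ to $\infty$ in $\BB H \setminus K_u^\wge$, driven by a Brownian motion independent of both $\eta^\wge|_{[0,u]}$ and $h^\wge$. By conformal invariance of SLE$_6$, transporting this curve via any conformal map onto a canonical representative of $\mcl W_u^\wge$ yields a chordal SLE$_6$ between the two marked points whose conditional law depends only on $\mcl W_u^\wge$ as an equivalence class. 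Hence, given $\mcl W_u^\wge$, the curve $\eta^\wge_{\mcl W_u^\wge}$ is conditionally independent of all past data. Combining this with the previous step, $(\mcl W_u^\wge, \eta^\wge_{\mcl W_u^\wge})$ is independent of $(\mcl K_u^\wge, \eta^\wge_{\mcl K_u^\wge})$.

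The main obstacle is the passage from the independence statements for the field and for the curve, taken separately, to joint independence at the level of curve-decorated quantum surfaces as equivalence classes. The conformal map realizing the equivalence of $(\BB H \setminus K_u^\wge , h^\wge|_{\BB H \setminus K_u^\wge})$ with its canonical representative depends on the past through $K_u^\wge$, so some care is needed to check that the transported curve on the representative has a conditional law depending only on $\mcl W_u^\wge$. This is resolved by the conformal invariance of SLE$_6$ combined with its independence from $h^\wge$, ensuring that the driving Brownian motion of the future curve on any representative of $\mcl W_u^\wge$ is independent of the past data.
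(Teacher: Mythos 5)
There is a genuine gap. Your scheme is to deduce independence of the curve-decorated surfaces from (A) $\mcl W_u^\wge$ independent of $\mcl K_u^\wge$ (via~\cite[Theorem~1.18]{wedges}) and (B) the conditional law of $\eta^\wge_{\mcl W_u^\wge}$ given $\mcl W_u^\wge$ depends only on $\mcl W_u^\wge$ (via the conformal Markov property of SLE$_6$). But (A)+(B) only yield that $(\mcl W_u^\wge , \eta^\wge_{\mcl W_u^\wge})$ is independent of the \emph{surface} $\mcl K_u^\wge$. To get independence of the \emph{curve-decorated} surface $(\mcl K_u^\wge , \eta^\wge_{\mcl K_u^\wge})$, you need the stronger input that $\mcl W_u^\wge$ is independent of the pair $(\mcl K_u^\wge , \eta^\wge_{\mcl K_u^\wge})$ — i.e.\ the future field is independent not only of the past surface but also of how $\eta^\wge$ traced through it. You never establish this, and it does not follow from (A) alone; the curve $\eta^\wge_{\mcl K_u^\wge}$ is genuinely extra information beyond $\mcl K_u^\wge$. (Your parenthetical claim that $\mcl W_u^\wge$ ``jointly with the collection of singly marked quantum disks corresponding to the bubbles disconnected from $\infty$ by $\eta^\wge|_{[0,u]}$'' is independent of $\mcl K_u^\wge$ is also confused, since those bubbles \emph{are} the beads of $\mcl K_u^\wge$.)

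A secondary issue is that~\cite[Theorem~1.18]{wedges} does not directly state a past/future independence of the Markov type; it gives the law of $(\mcl W_u^\wge , \eta^\wge_{\mcl W_u^\wge})$ and the conditional law of the bubbles given their boundary lengths (equivalently, given $Z^\wge$). The paper's proof fills precisely the gaps above by a more structural route: it shows $(\mcl W_u^\wge , \eta^\wge_{\mcl W_u^\wge})$ is a.s.\ determined by $((Z^\wge - Z^\wge_u)|_{[u,\infty)}, \text{future bubbles})$ (using~\cite[Theorem~1.16]{wedges}), shows $(\mcl K_u^\wge , \eta^\wge_{\mcl K_u^\wge})$ is a.s.\ determined by $(Z^\wge|_{[0,u]}, \text{past bubbles})$ (using~\cite[Lemma~3.7]{gwynne-miller-char}, the peanosphere coupling, and local absolute continuity — a non-trivial measurability statement), and then uses the Markov property of the stable process $Z^\wge$ together with the conditional independence of the bubbles given $Z^\wge$ to conclude that the two data packages are independent. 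Your argument has no analogue of the past-measurability step, which is exactly what would let you control the joint object $(\mcl K_u^\wge , \eta^\wge_{\mcl K_u^\wge})$ rather than $\mcl K_u^\wge$ alone.
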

\begin{proof}
By~\cite[Theorem~1.18]{wedges}, for each $u\geq 0$ the curve-decorated quantum surface $(\mcl W_u^\wge , \eta^\wge_{\mcl W_u^\wge})$ has the law of a $\sqrt{8/3}$-quantum wedge decorated by an independent chordal SLE$_6$ between its two marked points, parameterized by quantum natural time. By~\cite[Theorem~1.16]{wedges}, $(\mcl W_u^\wge , \eta^\wge_{\mcl W_u^\wge})$ is a.s.\ determined by the quantum surfaces parameterized by the bubbles lying to the left and right of $\eta^\wge_{\mcl W_u^\wge}$, which in turn are a.s.\ determined by $(Z^\wge - Z_u^\wge)|_{[u,\infty)}$ and the singly-marked quantum surfaces parameterized by the bubbles disconnected from $\infty$ by $\eta^\wge $ after time $u$ in the manner described in~\cite[Figure~1.15, Line~3]{wedges}.

By~\cite[Lemma~3.7]{gwynne-miller-char}, the coupling described in Section~\ref{sec-peanosphere}, and local absolute continuity between a $\sqrt{8/3}$-quantum wedge and a doubly marked quantum disk near their respective first marked points, $(\mcl K_u^\wge , \eta_{\mcl K_u^\wge}^\wge)$ is a.s.\ determined by $Z^\wge |_{[0,u]}$ and the singly-marked quantum surfaces parameterized by the bubbles disconnected from $\infty$ by $\eta^\wge |_{[0,u]}$.

Since $Z^\wge$ is a pair of independent totally asymmetric $3/2$-stable processes with no upward jumps, we infer that $Z^\wge|_{[0,u]}$ and $(Z^\wge - Z_u^\wge)|_{[u,\infty)}$ are independent.  Furthermore, by~\cite[Theorem~1.18]{wedges} the singly-marked quantum surfaces parameterized by the bubbles cut out by $\eta^\wge$ are conditionally independent quantum disks given $Z^\wge$, with boundary lengths determined by maginitudes of the downward jumps of $Z^\wge$ at the times the bubbles are disconnected from $\infty$. In particular, the conditional law given $Z^\wge$ of the bubbles in $\mcl K_u^\wge$ (resp.\ $\mcl W_u^\wge$) depends only on $Z^\wge|_{[0,u]}$ (resp.\ $(Z^\wge - Z_u^\wge)|_{[u,\infty)}$).

Combining the three preceding paragraphs shows that $(\mcl K_u^\wge , \eta_{\mcl K_u^\wge}^\wge)$ and $(\mcl W_u^\wge , \eta^\wge_{\mcl W_u^\wge})$ are independent.
\end{proof}

\begin{proof}[Proof of Lemma~\ref{lem-cont-peel-rn}]
Define the unexplored quantum surface $\mcl W_u^\wge$ for $\eta^\wge$ as in~\eqref{eqn-wge-hull-surface}. 
For convenience, fix an embedding $h_u^\wge$ of $\mcl W_u^\wge$ into $(\BB H , 0 ,\infty)$.
Let $\xi_u : \BB R\rta \BB R$ be the increasing function which parameterizes $\BB R$ according to $\nu_{h_u^\wge}$-length with $\xi_u (0) = 0$. 
If $E_\delta^\wge(u)$ occurs, then the point $\xi(-\frk l^L) = \eta^\sfi(0) \in \BB R$ corresponds to a point of $\BB R$ under our given embedding of $\mcl W_u^\wge$, namely the point $  \xi_u (-\frk l^L - L_u^\wge)$.
  
Let $\tau_u$ be the first time $\eta^\sfi$ hits $\eta^\wge([0,u])$ and note that $\{\tau_u = \infty\}  = \{u < \wh S\}$. By the locality property of SLE$_6$~\cite[Theorem~6.13]{lawler-book} and since $\eta^\sfi$ and $\eta^\wge$ are conditionally independent given $h^\wge$, the conditional law of the curve $\eta^\sfi_{\mcl W_u^\wge}|_{[0,\tau_u]}$ (viewed as a curve in $\BB H$, using the embedding $h_u^\wge$) given $(\mcl W_u^\wge , \eta^\wge_{\mcl W_u^\wge})$ and $(\mcl K_u^\wge , \eta_{\mcl K_u^\wge}^\wge)$ is that of a chordal SLE$_6$ from $\xi_u^\wge(-\frk l^L - L_u^\wge)$ to $\infty$ in $\BB H$, stopped at the first time it hits $\xi_u^\wge\left([-L_u^\wge , R_u^\wge] \right)$.  

Let $\eta_u^\sfi$ be a random curve whose conditional law given $(\mcl W_u^\wge , \eta^\wge_{\mcl W_u^\wge})$ and $(\mcl K_u^\wge , \eta_{\mcl K_u^\wge}^\wge)$ is that of a chordal SLE$_6$ from $\xi_u^\wge(-\frk l^L - L_u^\wge)$ to $\infty$ in $\BB H$ parameterized by quantum natural time with respect to $h_u^\wge$, which agrees with the embedding into $\BB H$ of $\eta^\sfi_{\mcl W_u^\wge}  $ until time $\tau_u$ (such a curve exists by the above application of the locality property of SLE$_6$).  
Let $Z_{u,v}^\sfi = (L_{u,v}^\sfi , R_{u,v}^\sfi)$ for $v\geq 0$ be the left/right boundary length process for the pair $(h_u^\wge ,\eta_u^\sfi)$, so that $Z_{u,\cdot}^\sfi \eqD Z^\sfi$. 

Let $F_{u,\ep}(r)$ for $r\geq 0$ be defined as in~\eqref{eqn-cont-peel-event} with $R_{u,\cdot}^\sfi$ in place of $R^\sfi$. 
One easily checks using~\eqref{eqn-cont-peel-property} that for $\ep \in (0, (\delta/2)^{(1-\zeta)^{-1}})$, 
\eqb \label{eqn-cont-peel-event-show0}
E_\delta^\wge(u) \cap \{u < \wh S\} \cap F_\ep(\frk l^L  + \frk l^R)  = E_\delta^\wge(u) \cap  F_{u,\ep}(\frk l^L + \frk l^R + L_u^\wge + R_u^\wge) .
\eqe
On the event $\{u  <\wh S\} \cap F_\ep(\frk l^L +\frk l^R)$, the quantum surface $\mcl W_{\ep,u}$ is the quantum surface parameterized by the bubble disconnected from $\infty$ by $\eta^\sfi_u$ which contains $\xi_u^\wge\left([-L_u^\wge , R_u^\wge] \right)$ on its boundary. By~\cite[Theorem~1.18]{wedges} and Lemma~\ref{lem-wge-ind}, on the event $ E_\delta^\wge(u) \cap  F_{u,\ep}(\frk l^L + \frk l^R + L_u^\wge + R_u^\wge) $ the conditional law of $\mcl W_{\ep,u}$ given the left/right quantum boundary length process $Z_{u,\cdot}^\sfi$ and $(\mcl K_u^\wge , \eta_{\mcl K_u^\wge}^\wge)$ (which together determine $Z^\sfi$ on this event) is that of a doubly marked quantum disk with left/right boundary lengths $\frk l^L  + L_u^\wge + R^\sfi_{T_\ep^-}  $ and $-R^\sfi_{T_\ep} - \frk l^L + R_u^\wge $. By combining this with~\eqref{eqn-cont-peel-event-show0} we see that assertion~\ref{item-cont-peel-unexplored} holds.
 
By Bayes' rule, the Radon-Nikodym derivative of the conditional law of $(\mcl K_u^\wge , \eta_{\mcl K_u^\wge}^\wge)$ given either of the two events in~\eqref{eqn-cont-peel-event-show0} with respect to its conditional law given only $E_\delta^\wge(u)$ is equal to
\eqb \label{eqn-cont-peel-bayes}
\frac{\BB P\left[  F_{u,\ep}(\frk l^L + \frk l^R + L_u^\wge + R_u^\wge) \,|\,   (\mcl K_u^\wge , \eta_{\mcl K_u^\wge}^\wge) \right] }{ \BB P\left[ E_\delta^\wge(u) \cap \{u < \wh S\} \cap F_\ep(\frk l^L  + \frk l^R) \right]  } \BB 1_{E_\delta^\wge(u)} ; 
\eqe 
here we note that $E_\delta^\wge(u) \in \sigma(\mcl K_u^\wge , \eta_{\mcl K_u^\wge}^\wge)$.
By Lemma~\ref{lem-cont-peel-prob}, on $E_\delta^\wge(u)$ it holds that
\eqb \label{eqn-cont-peel-numerator}
\BB P\left[  F_{u,\ep}(\frk l^L + \frk l^R + L_u^\wge + R_u^\wge) \,|\,   (\mcl K_u^\wge , \eta_{\mcl K_u^\wge}^\wge) \right] = \left(\frac32 +o_\ep(1) \right) \ep^{5/2}  \left(  \frk l^L + \frk l^R + L_u^\wge + R_u^\wge \right)^{-5/2}   .
\eqe  
Furthermore,  
\allb \label{eqn-cont-peel-denominator}
 \BB P\left[ E_\delta^\wge(u) \cap \{u < \wh S\} \cap F_\ep(\frk l^L  + \frk l^R) \right] 
&= \BB P\left[ E_\delta^\wge(u) \cap \{u < \wh S\} \,|\,  F_\ep(\frk l^L  + \frk l^R) \right] \BB P\left[F_\ep(\frk l^L  + \frk l^R) \right] \notag  \\
&=     \BB P\left[ E_\delta^\wge(u) \cap \{u < \wh S\} \,|\,  F_\ep(\frk l^L  + \frk l^R) \right]  \left(\frac32 +o_\ep(1) \right) \ep^{5/2} \left(\frk l^L + \frk l^R \right)^{-5/2}    .
\alle
By Lemma~\ref{lem-cont-peel-conv}, the conditional law of $Z^\wge|_{[0, \wh S ]} $ given $F_\ep(\frk l^L  + \frk l^R)$ converges to the law of $ Z^\nat|_{[0,S^\nat]}$ with respect to the Skorokhod topology so 
\eqb \label{eqn-cont-peel-cond-term}
\BB P\left[ E_\delta^\wge(u) \cap \{u < \wh S\} \,|\,  F_\ep(\frk l^L  + \frk l^R) \right] = \BB P\left[ E_\delta^\nat(u)   \right] (1+o_\ep(1)) 
\eqe 
where here we recall that $E_\delta^\nat(u) \subset \{u  <S^\nat\}$. In all of the above formulas, the rate of the $o_\ep(1)$ depends only on $\delta$, $\frk l^L$, and $\frk l^R$. Plugging~\eqref{eqn-cont-peel-numerator},~\eqref{eqn-cont-peel-denominator}, and~\eqref{eqn-cont-peel-cond-term} into~\eqref{eqn-cont-peel-bayes} yields assertion~\ref{item-cont-peel-explored}.  
\end{proof}

\begin{proof}[Proof of Theorem~\ref{thm-wge-bead-rn}]
 By Lemmas~\ref{lem-cont-peel-conv} and~\ref{lem-cont-peel-rn}, we can send $\ep\rta 0$ to obtain that for each $u\geq 0$ and each $\delta>0$, the following is true. 
\begin{enumerate}
\item The conditional law of the quantum surface $\mcl W_u^\nat$ given the curve-decorated quantum surface $(\mcl K_u^\nat , \eta^\nat_{\mcl K_u^\nat})$ and the event $E_\delta^\nat(u)  $ in~\eqref{eqn-reg-event-bead} is that of a doubly marked quantum disk with left/right boundary lengths $\frk l^L   + L_u^\nat$ and $\frk l^R + R_u^\nat $. \label{item-wge-bead-rn-unexplored}
\item The conditional law of $(\mcl K_u^\nat , \eta^\nat_{\mcl K_u^\nat})$ given $E_\delta^\nat(u)  $ is absolutely continuous with respect to the conditional law of $(\mcl K_u^\wge , \eta^\nat_{\mcl K_u^\wge})$ given the event $E_\delta^\wge(u)$ of~\eqref{eqn-reg-event}, with Radon-Nikodym derivative  
\eqb \label{eqn-rn-deriv-truncated}
\BB P\left[ E_\delta^\nat(u) \right]^{-1}  \left( \frac{L_u^\wge + R_u^\wge}{\frk l^L + \frk l^R} + 1 \right)^{-5/2} \BB 1_{E_\delta^\wge(u)}
\eqe 
\end{enumerate}
Note that to obtain statement~\ref{item-wge-bead-rn-unexplored}, we use that the conditional law of a doubly marked quantum disk with given left/right boundary lengths depends continuously on these boundary lengths in the topology of Section~\ref{sec-surface-curve}, which follows from a scaling argument as in Lemma~\ref{lem-bdy-cont}.

The second assertion of the theorem statement is precisely statement~\ref{item-wge-bead-rn-unexplored} above. 
Sending $\delta \rta 0$ in~\eqref{eqn-rn-deriv-truncated} and recalling~\eqref{eqn-trunc-to-time} shows that the conditional law of $(\mcl K_u^\nat , \eta^\nat_{\mcl K_u^\nat})$ given $\{u < S^\nat\}  $ is absolutely continuous with respect to the conditional law of $(\mcl K_u^\wge , \eta^\nat_{\mcl K_u^\wge})$ given $\{u < S^\wge\}$, with Radon-Nikodym derivative  
\eqbn
\BB P\left[ u < S^\nat \right]^{-1}  \left( \frac{L_u^\wge + R_u^\wge}{\frk l^L + \frk l^R} + 1 \right)^{-5/2} \BB 1_{(u < S^\wge)} .
\eqen
Multiplying this Radon-Nikodym derivative estimate by $\BB P[u < S^\nat]$ yields~\eqref{eqn-wge-bead-rn}. 
\end{proof}

\appendix

\section{Quantum disks}
\label{sec-disk}

To make this work more self-contained, in this appendix we recall the precise definition of the quantum disk from~\cite{wedges}. We will give the direct definition of this quantum surface in terms of Bessel excursions. There are also other equivalent ways of defining quantum disks, e.g., as limits of certain surfaces constructed from a free-boundary GFF~\cite[Appendix A]{wedges} or as the quantum surfaces parameterized by the bubbles cut out by an SLE$_{\kappa'}$ curve for $\kappa'  =16/\gamma^2$ on a certain LQG surface~\cite[Theorems 1.16 and 1.17]{wedges}.  We also expect that the definition of the quantum disk given here is equivalent to the one in~\cite{hrv-disk}. It is likely that this could be proved using similar techniques to the ones in~\cite{ahs-sphere} (which proves the analogous statement for quantum spheres).

We first define an infinite measure on doubly marked quantum disks, then condition this measure on certain events to obtain probability measures, then forget one marked point to obtain singly marked quantum disks. It is convenient to define a quantum disk parameterized by the infinite strip $\mcl S = \BB R\times (0,\pi)$ since the field takes a simpler form in this case (one can parameterize by the unit disk instead by applying a conformal map and using~\eqref{eqn-lqg-coord}). Let $\mcl H^0(\mcl S)$ (resp.\ $\mcl H^\dagger(\mcl S)$) be the Hilbert space of mean-zero functions on $\mcl S$ with finite Dirichlet energy which are constant (resp.\ have mean zero) on each horizontal line segment $\{x\} \times [0,\pi]$. Then the space of all mean-zero functions on $\mcl S$ with finite Dirichlet energy is the orthogonal direct sum of $\mcl H^0(\mcl S)$ and $\mcl H^\dagger(\mcl S)$~\cite[Lemma 4.3]{wedges}. The following definition is given in~\cite[Section~4.5]{wedges}. 

\begin{defn} \label{def-infinite-disk}
For $\gamma \in (0,2)$, the \emph{infinite measure on quantum disks} is the measure $ \mcl M^{\op{disk}}$ on doubly marked quantum surfaces $(\mcl S ,h , -\infty,\infty)$ defined as follows. 
\begin{itemize}
\item ``Sample" $e$ from the infinite excursion measure of a Bessel process of dimension $3-\frac{4}{\gamma^2}$ (see~\cite[Remark 3.7]{wedges}). Let $h^0 \in \mcl H^0(\mcl S)$ be the function whose common value on each segment $\{x\}\times [0,\pi]$ is given by the process $2\gamma^{-1} \log e$ reparameterized to have quadratic variation $2\,dx$. Note that this process is only defined modulo translations -- different translations give equivalent quantum surfaces.
\item Let $h^\dagger$ be sampled from the law of the projection of a free-boundary GFF on $\mcl S$ onto $\mcl H^\dagger(\mcl S)$, independent from $h^0$.
\item Let $h = h^0 + h^\dagger$. 
\end{itemize}
\end{defn}

We note that for each $t> 0$, the measure $\mcl M^{\op{disk}}$ assigns finite mass to the set of surfaces whose corresponding excursion $e$ has time length at least $t$ (as the Bessel excursion measure assigns finite mass to those Bessel excursions with length at least $t$). This implies that for each $\frk a , \frk l > 0$, $\mcl M^{\op{disk}}$ assigns finite mass to the set of surfaces with LQG area at least $\frk a$ and/or LQG boundary length at least $\frk l$. 

The space $\BB M_k^{\op{CPU}}$ defined in Section~\ref{sec-surface-curve} is a separable metric space, and we can view quantum surfaces with boundary as elements of this space equipped with the curve which parameterizes the boundary according to the LQG boundary length measure. Hence it makes sense to talk about regular conditional laws for quantum surfaces. The following corresponds to~\cite[Definition 4.21]{wedges}.

\begin{defn} \label{def-disk} 
Let $\mcl M^{\op{disk}}$ be as in Definition~\ref{def-infinite-disk}. 
\begin{itemize}
%\item For $\frk a > 0$, the \emph{quantum disk with area $\frk a$} is the regular conditional distribution of the measure $(\nu_h(\mcl S))^{-1} d\mcl M^{\op{disk}}(h)$ given that $\mu_h(\mcl S) = \frk a$.
\item For $\frk l > 0$, the \emph{quantum disk with boundary length $\frk l$} is the regular conditional distribution of the measure $  d\mcl M^{\op{disk}}(h)$ given that $\nu_h(\bdy\mcl S) = \frk l$.
\item For $\frk l^L ,\frk l^R > 0$, the \emph{quantum disk with left/right boundary lengths $\frk l^L, \frk l^R > 0$} is the regular conditional distribution of the measure $ d\mcl M^{\op{disk}}(h)$ given that $\nu_h(\BB R\times \{\pi\}) = \frk l^L$ and $\nu_h(\BB R\times \{0\}) = \frk l^R$. 
\end{itemize}
\end{defn}

The above definitions give us doubly marked quantum disks. One defines singly marked quantum disks by forgetting one of the marked points for a doubly marked quantum disk (i.e., projecting from $\BB M_2^{\op{CPU}}$ to $\BB M_1^{\op{CPU}}$). 
It is shown in~\cite[Proposition~A.8]{wedges} that the marked points for a doubly marked quantum disk are independent samples from the $\gamma$-LQG boundary length measure if we condition on the disk viewed as an unmarked quantum surface. Equivalently, suppose that $(\mcl S, h, -\infty,+\infty)$ is a quantum disk, that $x,y \in \partial \mcl S$ are picked independently from the $\gamma$-LQG boundary measure $\nu_h$, and that $\varphi \colon \mcl S \to \mcl S$ is a conformal transformation with $\varphi(-\infty) = x$ and $\varphi(+\infty) = y$.  Then with $\wt{h} = h \circ \varphi + Q \log|\varphi'|$, we have that~$\wt{h}$ and~$h$ have the same law modulo a horizontal translation of~$\mcl S$.

A priori the regular conditional laws of the infinite quantum disk measure given the boundary length only make sense for a.e.\ $\frk l > 0$. However, as explained just after~\cite[Definition~4.21]{wedges}, the quantum disk possesses a scale invariance property which allows us to define these regular conditional laws for every choice of $\frk l > 0$. In particular, if $(\mcl S  , h , -\infty,\infty)$ is a quantum disk with boundary length $\frk l$ and $C >0$, then $(\mcl S , h + C , -\infty,\infty)$ is a quantum disk with boundary length $e^{\gamma C} \frk l$. To make sense of quantum disks with arbitrary choices of left/right boundary lengths $\frk l^L , \frk l^R> 0$, one can, for example, proceed as follows. Since the marked points for a doubly marked quantum disk are independent samples from the LQG boundary length measure, if we start with a singly marked quantum disk of boundary length $\frk l^L + \frk l^R$ and choose a second (random) marked point by the condition that the boundary lengths of the arcs separating the two marked points have quantum lengths $\frk l^L$ and $\frk l^R$, respectively, then we obtain a doubly marked quantum disk with left/right boundary lengths $\frk l^L$ and $\frk l^R$.

\bibliography{cibiblong,cibib,perc-ref}
\bibliographystyle{hmralphaabbrv}

\end{document}